\definecolor{linkblue}{HTML}{003d73}
\definecolor{linkgreen}{HTML}{006161}
\definecolor{linkred}{HTML}{a11950}
\let\mcnewpage=\newpage
\newcommand{\TrickSupertabularIntoMulticols}{%
\renewcommand\newpage{%
    \if@firstcolumn%
        \hrule width\linewidth height0pt%
            \columnbreak%
        \else%
          \mcnewpage%
        \fi%
}%
}
\newtheorem{theorem}{Theorem}
\newtheorem{corollary}[theorem]{Corollary}
\newtheorem*{sticknumber}{Theorem~\ref*{thm:stick numbers}}
\newtheorem*{newbounds}{Theorem~\ref*{thm:new bounds}}
\theoremstyle{definition}
\newcommand{\R}{\mathbb{R}}
\newcommand{\Pol}{\operatorname{Pol}}
\newcommand{\stick}{\operatorname{stick}}
\newcommand{\eqstick}{\operatorname{eqstick}}
\newcommand{\superbridge}{\operatorname{sb}}
\newcommand{\bm}{\boldmath}
\newenvironment{coordinates}[1]{
	\nobreak\vfil\penalty0\vfilneg\vtop\bgroup
	\begin{center} \begin{normalsize} #1 \end{normalsize} \end{center} 

		\ttfamily \begin{tiny}\begin{center}
	}{
		\end{center}\end{tiny}\par
		\xdef\tpd{\the\prevdepth}\egroup\prevdepth=\tpd
	}
\title{New Stick Number Bounds from Random Sampling of Confined Polygons}
\author{Thomas D.\ Eddy}
\author{Clayton Shonkwiler}
\affil{Department of Mathematics, Colorado State University, Fort Collins, CO}
\date{}
\begin{document}

\maketitle

\begin{abstract}
	The stick number of a knot is the minimum number of segments needed to build a polygonal version of the knot. Despite its elementary definition and relevance to physical knots, the stick number is poorly understood: for most knots we only know bounds on the stick number. We adopt a Monte Carlo approach to finding better bounds, producing very large ensembles of random polygons in tight confinement to look for new examples of knots constructed from few segments. We generated a total of 220 billion random polygons, yielding either the exact stick number or an improved upper bound for more than 40\% of the knots with 10 or fewer crossings for which the stick number was not previously known. We summarize the current state of the art in \autoref{tab:stick numbers}, which gives the best known bounds on stick number for all knots up to 10 crossings.
\end{abstract}

\section{Introduction}

The stick number of a knot is the minimum number of segments needed to create a polygonal version of the knot. The equilateral stick number is defined similarly, though with the added restriction that all the segments should be the same length. Despite their elementary definitions, these invariants are only poorly understood: for example, prior to our work the exact stick number was only known for 30 of the 249 nontrivial knots up to 10 crossings.

The sheer simplicity of its definition partially explains the appeal of the stick number as a knot invariant, but the stick number is also interesting as a prototypical \emph{geometric} or \emph{physical} knot invariant since the number of segments needed to build a knot is a measure of the geometric complexity of the knot. Polygonal knots serve as a model of ring polymers like bacterial DNA (see the excellent survey~\cite{Orlandini:2007kn}) so, while this model is not particularly realistic without adding further constraints, the stick number gives an indication of the minimum size of a ring polymer (or, indeed, any knotted physical system) needed to achieve a given knot type. 

Any example of a polygonal knot gives an upper bound on the stick number of the knot: if we can build a polygonal version of a knot $K$ out of 10 sticks, then the stick number of $K$ cannot possibly be bigger than 10. Consequently, we can prove state-of-the-art bounds on stick numbers by constructing examples of knots with fewer sticks than previously observed. If we happen to construct an example of a knot $K$ with $n$ sticks where $n$ is known to be a lower bound on the stick number of $K$, then this proves that the stick number of $K$ is exactly $n$.

Our approach to finding such examples is to randomly sample the space of equilateral $n$-stick knots for small $n$, which requires identifying the knot type of each sample and checking to see if it beats the previous record. Early work of Millett~\cite{Millett:1994fo,Millett:2000fe} and Calvo and Millett~\cite{Calvo:1998kr} used a similar Monte Carlo approach, but the challenge is that the overwhelming majority of samples are not interesting, so getting good results requires generating a truly vast number of samples. Indeed, the best extant bounds on (equilateral) stick number are due to Scharein~\cite{Scharein:1998tu} and Rawdon and Scharein~\cite{Rawdon:2002wj}, who used a combination of stochastic agitation and relaxation starting from standard (rather than random) conformations.

\begin{figure}[t]
	\centering
		\subfloat[$9_{35}$]{\includegraphics[scale=.25,valign=c]{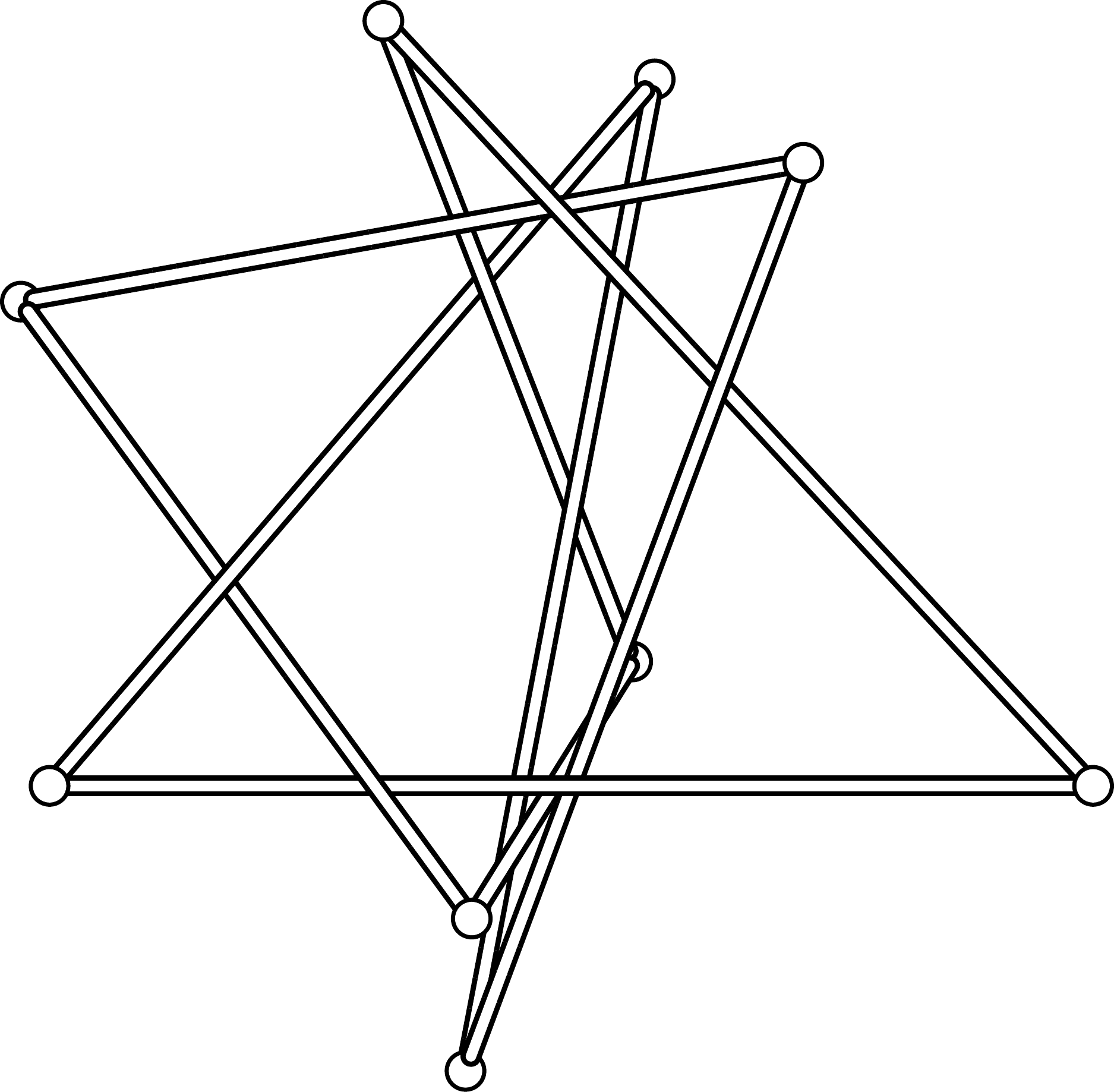}
		 \vphantom{\includegraphics[scale=.25,valign=c]{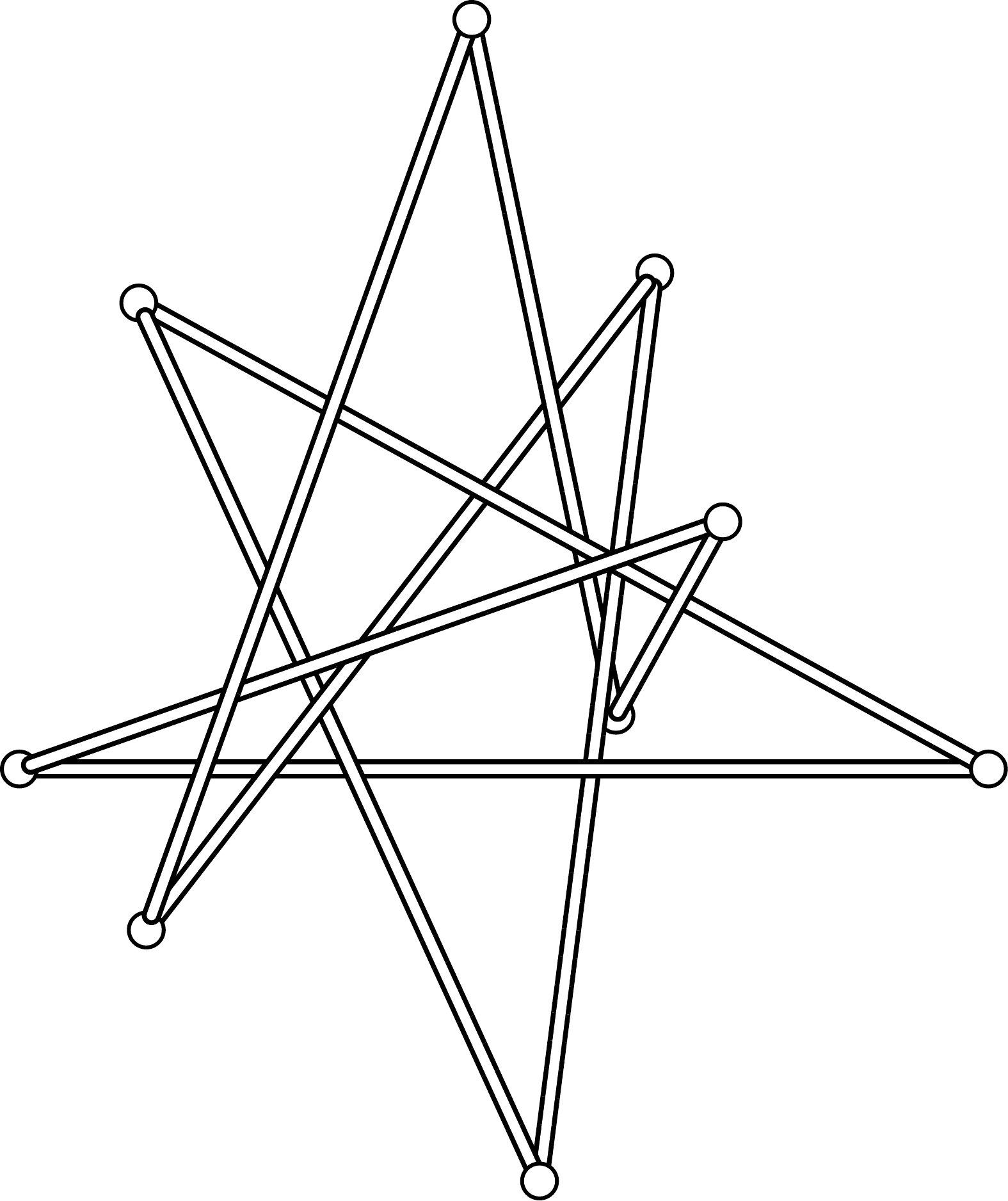}}}\qquad
		\subfloat[$9_{39}$]{\includegraphics[scale=.25,valign=c]{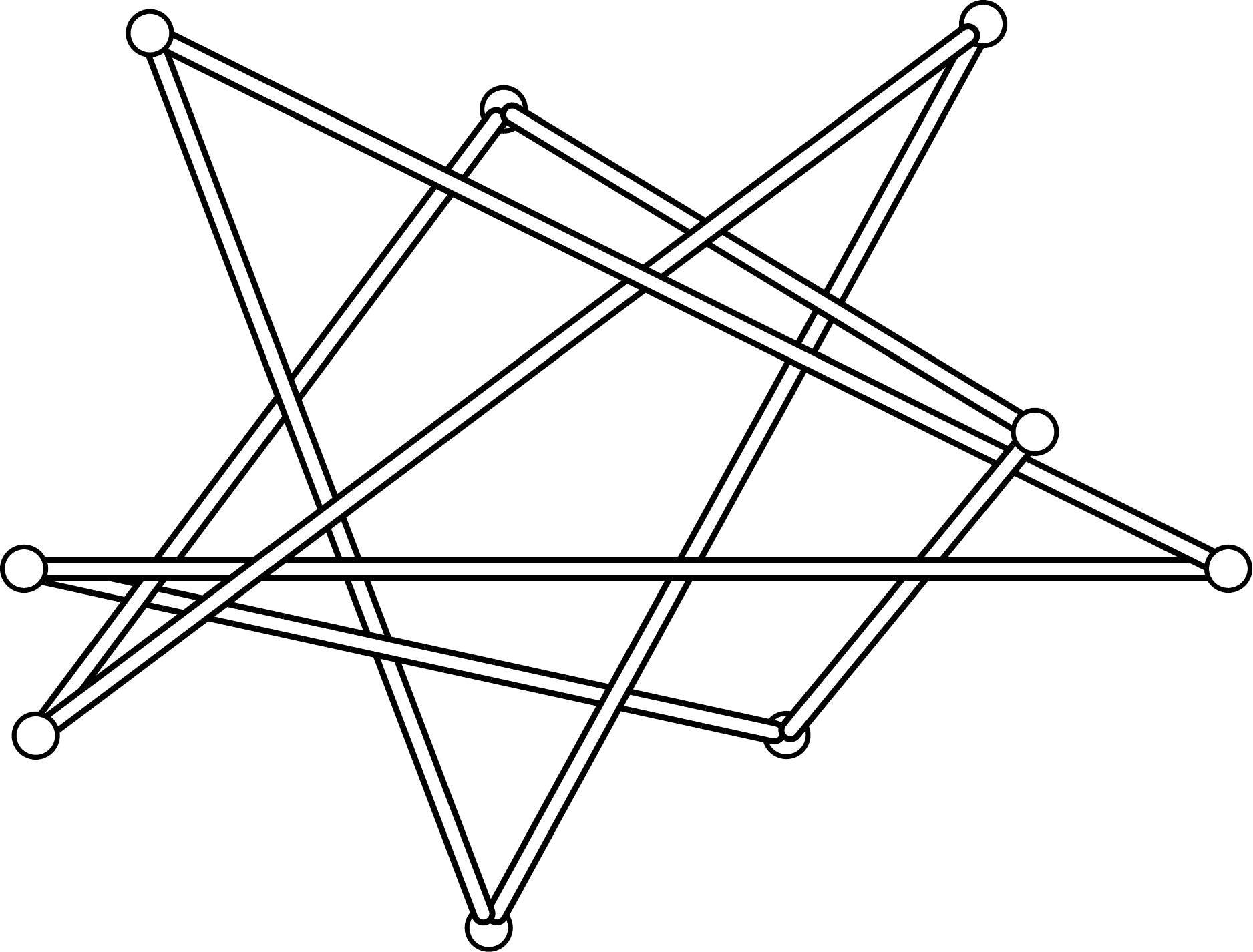}
		 \vphantom{\includegraphics[scale=.25,valign=c]{9_43z.pdf}}}\qquad
		\subfloat[$9_{43}$]{\includegraphics[scale=.25,valign=c]{9_43z.pdf}} 
		
		\subfloat[$9_{45}$]{\includegraphics[scale=.25,valign=c]{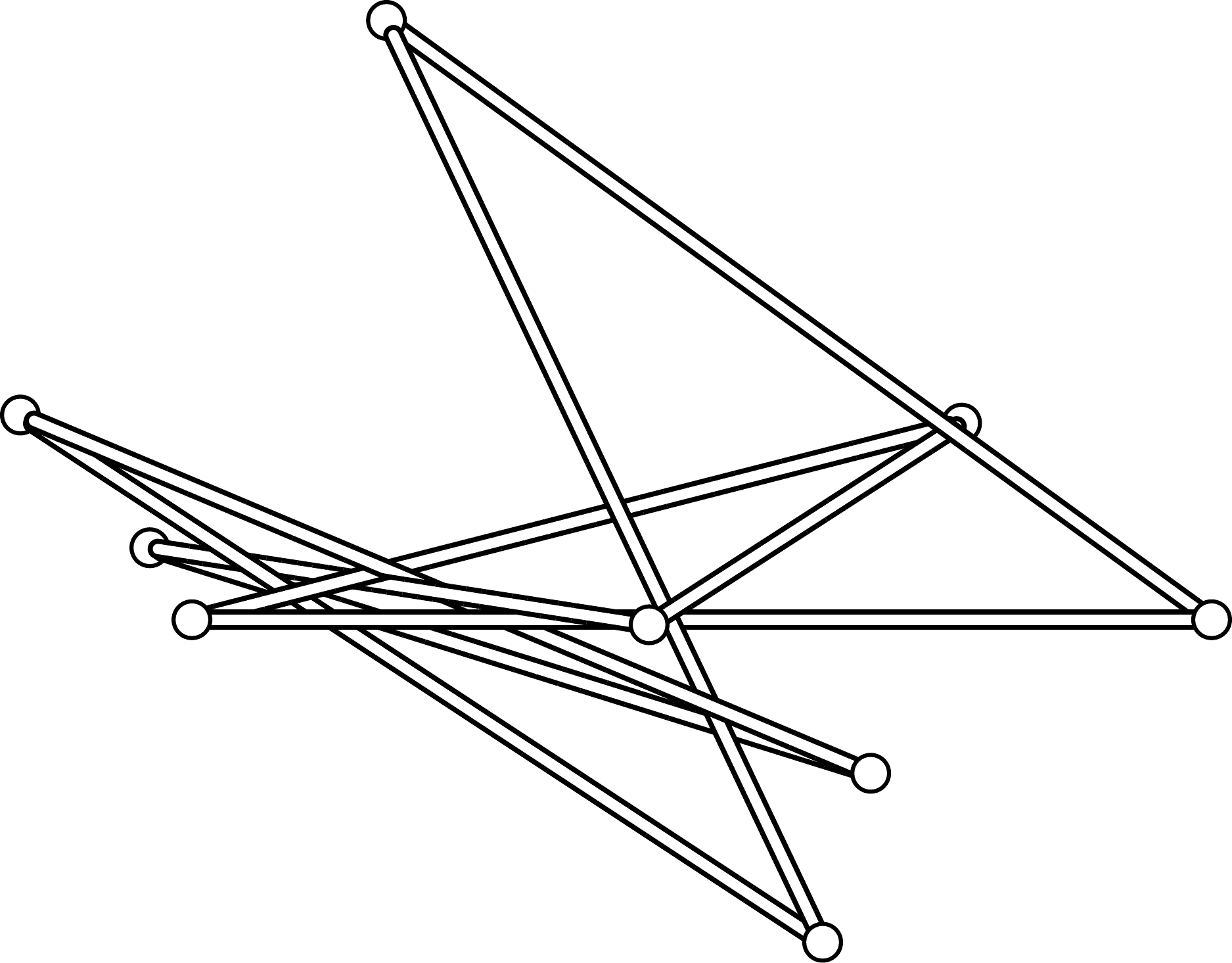}
		 \vphantom{\includegraphics[scale=.25,valign=c]{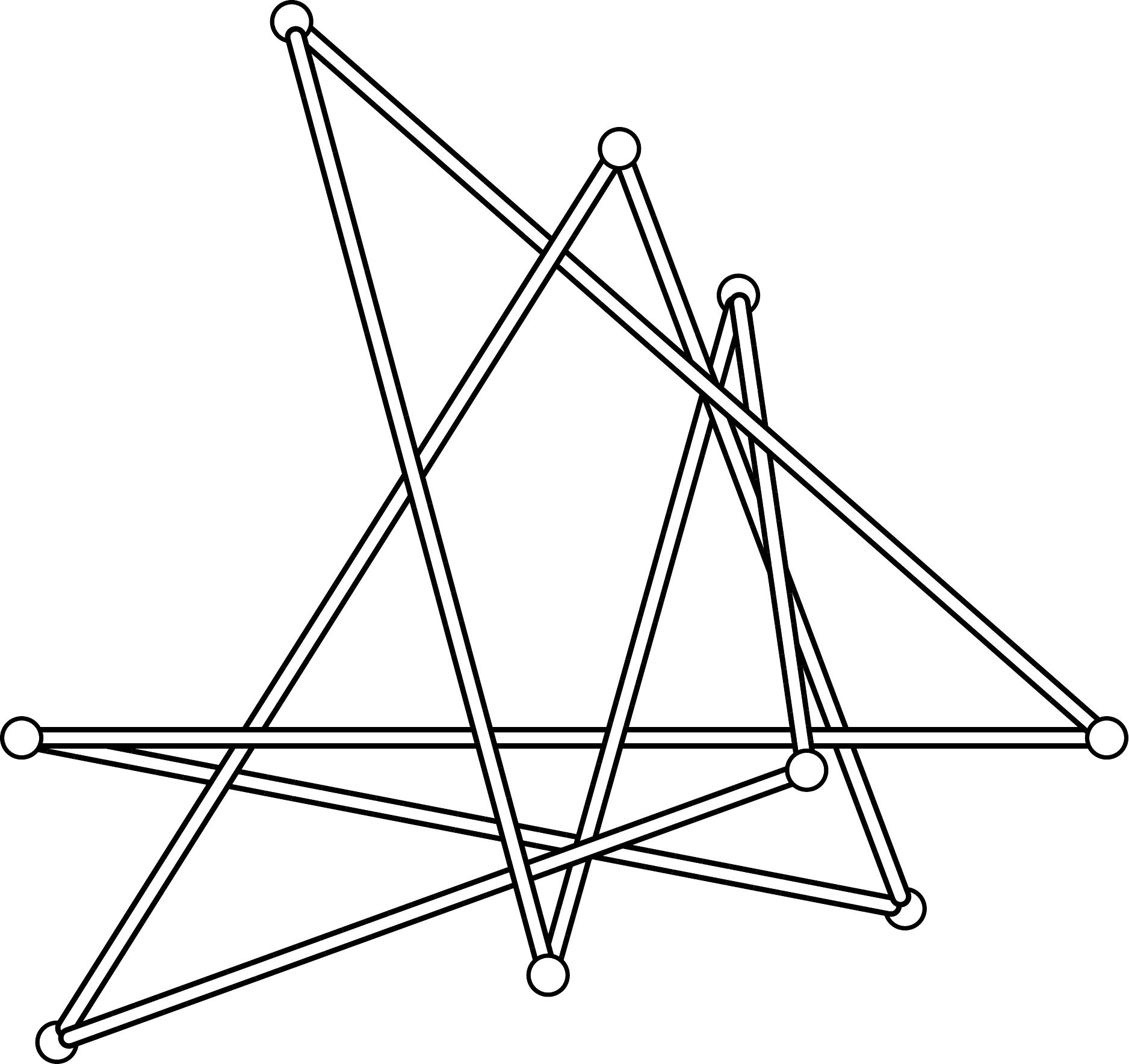}}}\qquad
		\subfloat[$9_{48}$]{\includegraphics[scale=.25,valign=c]{9_48z.pdf}}
	\caption{Minimal stick equilateral representations of the knots $9_{35}$, $9_{39}$, $9_{43}$, $9_{45}$, and $9_{48}$. Each knot is shown in orthographic perspective, viewed from the direction of the positive $z$-axis.}
	\label{fig:9sticks}
\end{figure}

The main novelty of our Monte Carlo approach is that we generate polygonal knots with equal-length edges in very tight confinement. Since complicated knots are typically condensed, this approach is a form of enriched sampling which boosts the yield of the complicated knots we seek. Generating confined knots to explore stick number is not new in itself -- Calvo and Millett~\cite{Calvo:1998kr,Millett:2000fe} generated stick knots with vertices uniformly distributed in the unit cube (the so-called \emph{uniform random polygon} model) -- but ours seems to be the first such investigation which addresses \emph{equilateral} stick number. Our strategy depends, in turn, on recent advances in sampling random equilateral polygons using symplectic geometry~\cite{Cantarella:2016iy,Cantarella:2016bt}. 

The efficiency of our algorithms combined with modern hardware leads to a substantial improvement over earlier Monte Carlo experiments in the sheer size of the ensembles we produce. We generated a total of 220 billion random knots, requiring approximately 50,000 core-hours of processing time on a 64-core machine with 2.3 GHz AMD Opteron 6276 processors. The bulk of this time was devoted to identifying the knot type of each of these 220 billion knots, which we did using the HOMFLY polynomial, hyperbolic volume, and crossing number.

We found examples of the knots\footnote{For knots up to 10 crossings we use Alexander--Briggs notation consistent with the (Perko-corrected) Rolfsen table~\cite{Rolfsen:2003ts}; for knots with 11 or more crossings, we use the Dowker--Thistlethwaite naming convention.} $9_{35}$, $9_{39}$, $9_{43}$, $9_{45}$, and $9_{48}$ built with 9 equal-length sticks, shown in \autoref{fig:9sticks}; since the stick number of these knots is known to be at least 9, this gives the exact (equilateral) stick number for these five knots:

\begin{theorem}\label{thm:stick numbers}
	The stick number and equilateral stick number of each of the knots $9_{35}$, $9_{39}$, $9_{43}$, $9_{45}$, and $9_{48}$ is exactly 9.
\end{theorem}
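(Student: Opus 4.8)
The plan is to pin each stick number down by trapping it between matching lower and upper bounds. Because every equilateral polygon is in particular a polygon, any realization of a knot $K$ by $n$ equal-length sticks is also a realization by $n$ sticks, so $\stick(K) \le \eqstick(K)$ for every $K$. Hence it suffices to prove, for each $K \in \{9_{35}, 9_{39}, 9_{43}, 9_{45}, 9_{48}\}$, the two inequalities $\stick(K) \ge 9$ and $\eqstick(K) \le 9$; together these give $9 \le \stick(K) \le \eqstick(K) \le 9$ and force both invariants to equal $9$. The lower bound requires no new work here: for each of these five knots the stick number has already been shown in the literature to be at least $9$ (as recorded in \autoref{tab:stick numbers}).

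The whole burden therefore falls on the upper bound $\eqstick(K) \le 9$, which I would establish constructively by exhibiting, for each knot, a single explicit equilateral $9$-gon of the correct type --- precisely the polygons shown in \autoref{fig:9sticks} and located by the random search. The steps are: (i) record the nine vertex coordinates of each polygon to sufficient precision; (ii) verify that its nine edges share a common length and that the closed polygon is embedded, i.e.\ no two edges meet; and (iii) identify the knot type of the resulting embedding by projecting to a generic direction, reading off the planar diagram, computing the HOMFLY polynomial, falling back on hyperbolic volume and crossing number when HOMFLY alone does not suffice, and matching the results against the tabulated invariants of the target knot.

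The crux --- and the main obstacle --- is upgrading these numerically discovered configurations into a rigorous certificate of knot type. HOMFLY is not a complete invariant, so by itself it can fail to separate the target knot from others sharing its polynomial, which is exactly why the auxiliary invariants enter. Equally important, since the vertex data issue from a floating-point sampler, one must confirm that the identification is \emph{robust}: the polygon should be embedded with a strictly positive separation between non-adjacent edges, so that no near-crossing can flip under rounding and alter the diagram. Granting this, the chain of inequalities closes and each of the five stick numbers is exactly $9$.
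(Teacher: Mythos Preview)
Your argument is essentially the paper's own: the lower bound $\stick(K)\ge 9$ comes from Calvo's classification of knots with $\stick\le 8$ (\autoref{thm:8sticks}), and the upper bound $\eqstick(K)\le 9$ from the explicit equilateral $9$-gons found by sampling and identified via HOMFLY, hyperbolic volume, and crossing number. The one sharpening the paper supplies is \autoref{theorem:millett_rawdon} (Millett--Rawdon), which turns your informal robustness remark into a quantitative certificate --- comparing $|L_i-1|$ to $\min\{\mu(K)/n,\,\mu(K)^2/4\}$ --- that an approximately equilateral numerical polygon guarantees a genuinely equilateral one of the same knot type; your step~(ii) as written (``verify that its nine edges share a common length'') cannot literally succeed on floating-point data without this bridge.
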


As we will see, this implies that each of these knots has superbridge index 4.

Also, we found new upper bounds on the equilateral stick number for 88 of the 214 knots up to 10 crossings for which it remains unknown:

\begin{theorem}\label{thm:new bounds}The equilateral stick number of each of the knots $9_2$, $9_3$, $9_{11}$, $9_{15}$, $9_{21}$, $9_{25}$, $9_{27}$, $10_8$, $10_{16}$, $10_{17}$, $10_{56}$, $10_{83}$, $10_{85}$, $10_{90}$, $10_{91}$, $10_{94}$, $10_{103}$, $10_{105}$, $10_{106}$, $10_{107}$, $10_{110}$, $10_{111}$, $10_{112}$, $10_{115}$, $10_{117}$, $10_{118}$, $10_{119}$, $10_{126}$, $10_{131}$, $10_{133}$, $10_{137}$, $10_{138}$, $10_{142}$, $10_{143}$, $10_{147}$, $10_{148}$, $10_{149}$, $10_{153}$, and $10_{164}$ is less than or equal to 10.
	
	The equilateral stick number of each of the knots $10_3$, $10_6$, $10_7$, $10_{10}$, $10_{15}$, $10_{18}$, $10_{20}$, $10_{21}$, $10_{22}$, $10_{23}$, $10_{24}$, $10_{26}$, $10_{28}$, $10_{30}$, $10_{31}$, $10_{34}$, $10_{35}$, $10_{38}$, $10_{39}$, $10_{43}$, $10_{44}$, $10_{46}$, $10_{47}$, $10_{50}$, $10_{51}$, $10_{53}$, $10_{54}$, $10_{55}$, $10_{57}$, $10_{62}$, $10_{64}$, $10_{65}$, $10_{68}$, $10_{70}$, $10_{71}$, $10_{72}$, $10_{73}$, $10_{74}$, $10_{75}$, $10_{77}$, $10_{78}$, $10_{82}$, $10_{84}$, $10_{95}$, $10_{97}$, $10_{100}$, and $10_{101}$ is less than or equal to 11.
	
	The equilateral stick number of each of the knots $10_{76}$ and $10_{80}$ is less than or equal to 12.
	
	In particular, all knots up to 10 crossings have equilateral stick number $\leq 12$.
\end{theorem}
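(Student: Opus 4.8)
The plan is to prove each upper bound by exhibiting an explicit equilateral polygon realizing the given knot type with the stated number of edges. As noted in the introduction, a single example of a knot $K$ built from $n$ equal-length sticks certifies that $\eqstick(K) \le n$, so the entire theorem reduces to a finite list of explicit constructions---one polygon per knot, recorded by its vertex coordinates. The difficulty is not in verifying any individual bound (which amounts to checking the knot type of a fixed polygon) but in \emph{finding} the examples in the first place, since we have no a priori construction that produces these particular knots with so few sticks.

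First I would set up a Monte Carlo search over the space of equilateral $n$-gons for the relevant small values of $n$ (here $n \in \{10, 11, 12\}$). Rather than sampling uniformly from all equilateral polygons, I would sample from the subspace of polygons held in tight confinement, using the symplectic-geometric algorithms of Cantarella et al.~\cite{Cantarella:2016iy,Cantarella:2016bt}. The key heuristic is that geometrically complex knots are typically condensed, so restricting to tightly confined conformations acts as a form of importance sampling that dramatically increases the frequency with which complicated knot types appear among the samples. This enrichment is what makes the search feasible: a naive uniform search would have to generate an astronomically larger ensemble to encounter these knots even once.

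Next I would run this search at massive scale---on the order of hundreds of billions of samples---and, for each sampled polygon, identify its knot type. Since no single polynomial invariant separates all knots up to 10 crossings, I would build a classification pipeline combining the HOMFLY polynomial with the geometric invariants hyperbolic volume and crossing number, to disambiguate knots that share a HOMFLY polynomial. Whenever a sample is identified as one of the target knots realized with fewer sticks than the previous record, I would record its coordinates as a candidate certificate. The final statement---that every knot up to 10 crossings has $\eqstick \le 12$---then follows by combining the new examples with the bounds already known in the literature.

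The main obstacle I anticipate is twofold. The statistical obstacle is the extreme rarity of the target knots even under confinement: most samples are unknots or simple knots, so the yield of any particular 10-crossing knot is minuscule, which is precisely why an ensemble of $220$ billion polygons is required. The reliability obstacle is correct knot identification: a single misidentified sample would produce a false bound, so each recorded certificate must be independently re-verified---recomputing its invariants and confirming that the stated number of sticks genuinely realizes the claimed knot---before the bound can be asserted with confidence.
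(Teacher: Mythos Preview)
Your proposal is essentially the paper's own approach: generate huge ensembles of confined equilateral polygons via the symplectic/TSMCMC sampler, identify knot types with HOMFLY plus hyperbolic volume and crossing number, and record any example that beats the previous record as a certificate for the bound.

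One technical ingredient you omit, which the paper treats as essential, is the passage from \emph{numerically} equilateral polygons to \emph{exactly} equilateral ones. The samples produced on a computer only have $|L_i - 1|$ small, not zero, so strictly speaking they do not yet witness an equilateral stick number bound. The paper closes this gap by invoking the Millett--Rawdon theorem (\autoref{theorem:millett_rawdon}): after finding a candidate, they apply knot-type-preserving crankshaft moves to increase the minimum distance $\mu(K)$ between non-adjacent edges, and then verify the quantitative inequality $|L_i-1| < \min\{\mu(K)/n,\ \mu(K)^2/4\}$, which guarantees the existence of a genuinely unit-edge polygon of the same knot type. Without this step your certificates are only approximate; you should add it to your verification stage.
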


An upper bound on equilateral stick number implies an upper bound on stick number, and all of the corresponding stick number bounds in \autoref{thm:new bounds} are new except for $10_{107}$, $10_{119}$, and $10_{147}$. These knots, together with $8_{19}$, $9_{29}$, $10_{16}$, and $10_{79}$, appeared on Rawdon and Scharein's list of knots for which they could not find an equilateral stick knot achieving a known bound on stick number, and thus were potential examples of knots for which stick number and equilateral stick number differ. Millett~\cite{Millett:2012dd} found an example of an equilateral 8-stick $8_{19}$, proving that $\eqstick(8_{19})=8$, which was already known to be the stick number, and our 10-stick $10_{16}$ (see \autoref{fig:big improvements}) beats the previous best bounds on both stick number (11) and equilateral stick number~(12). In turn, our observed equilateral 10-stick examples of $10_{107}$, $10_{119}$, and $10_{147}$ match the best previous bound on stick number, leaving only $9_{29}$ and $10_{79}$ from Rawdon and Scharein's list. 

\begin{figure}[t]
	\centering
		\subfloat[$10_{16}$]{\includegraphics[scale=.3,valign=c]{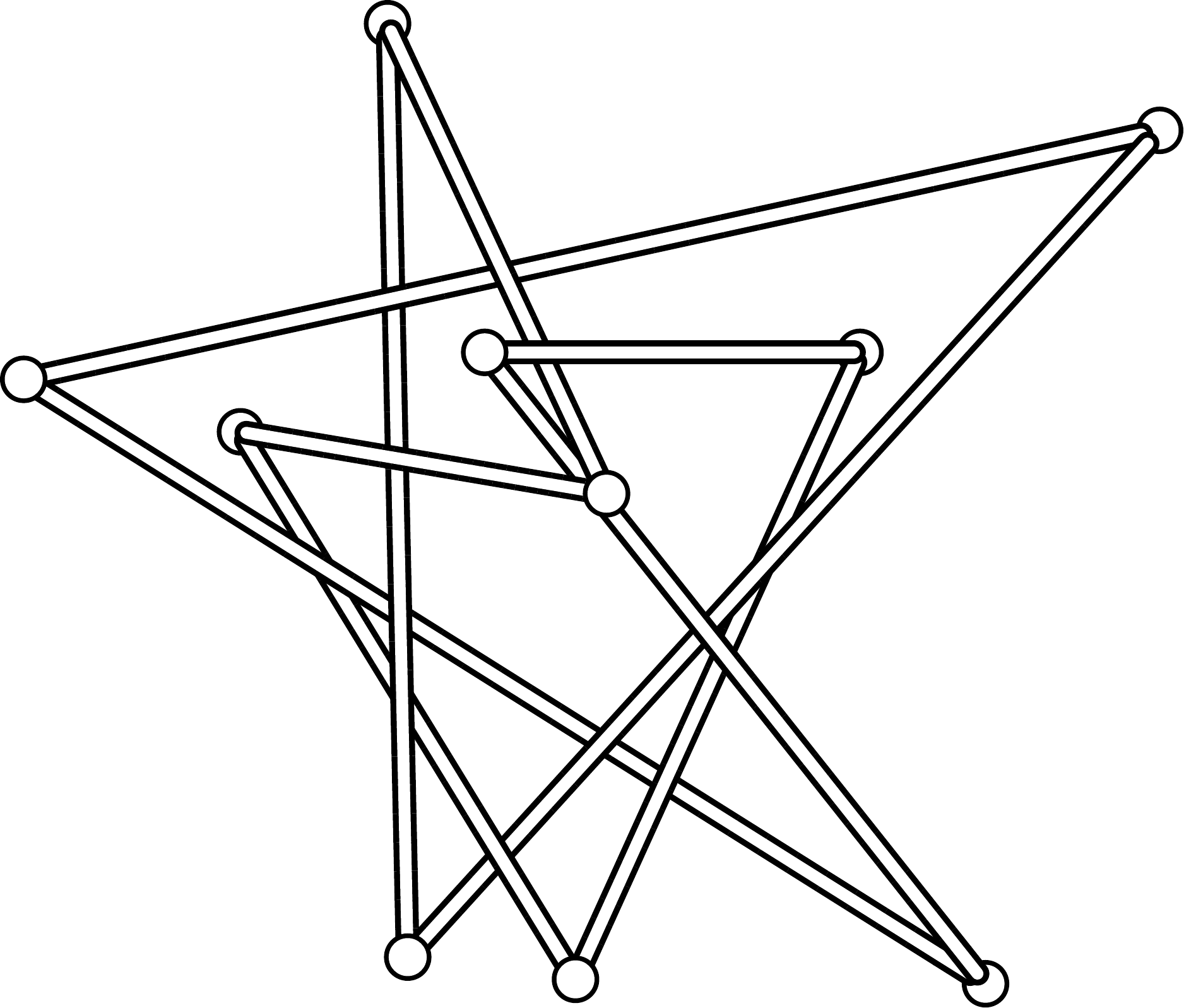}}\qquad\quad 
		\subfloat[$10_{84}$]{\includegraphics[scale=.3,valign=c]{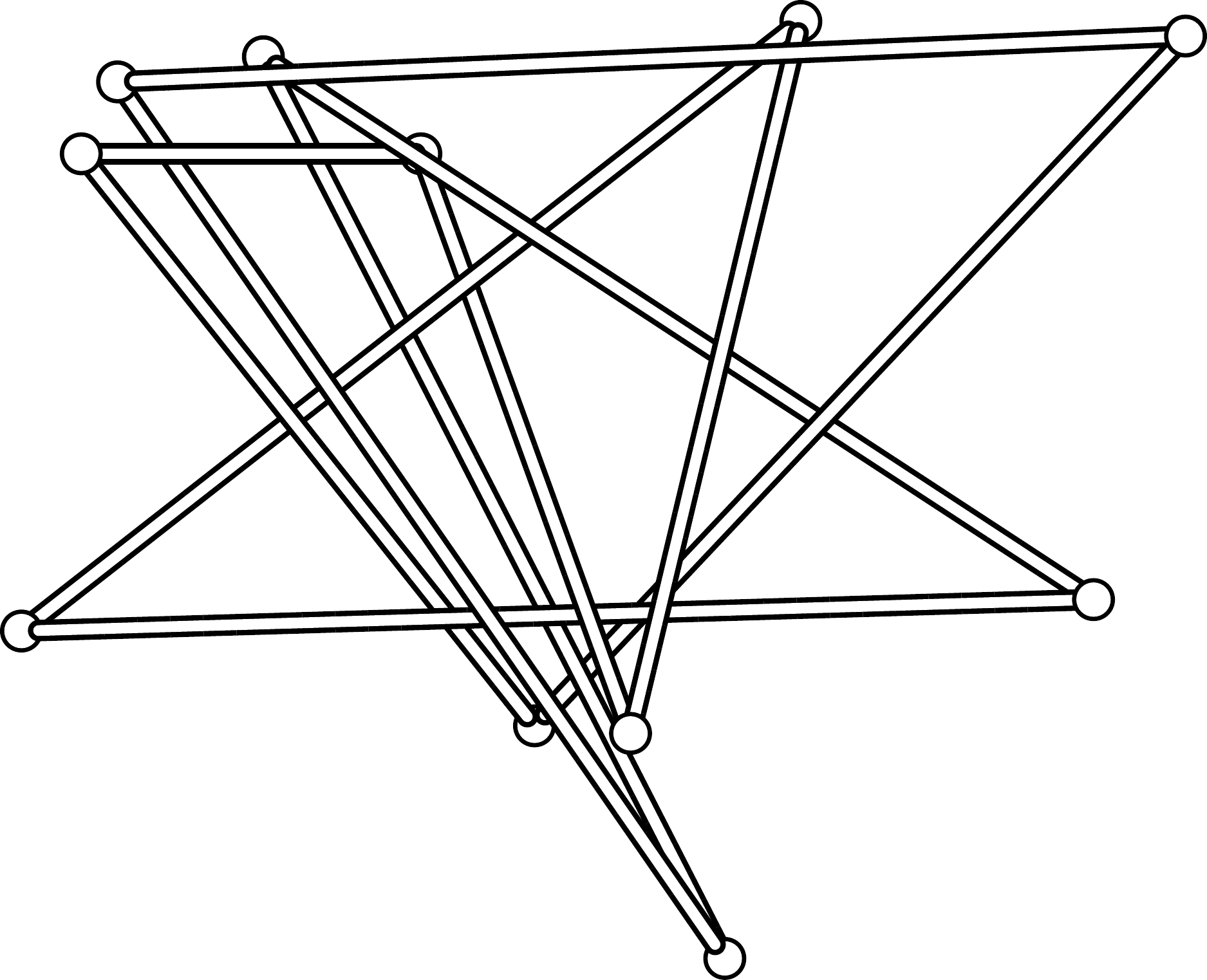}
		\vphantom{\includegraphics[scale=.3,valign=c]{10_16offaxis.pdf}}}
	\caption{The 10-stick $10_{16}$ improves the best previous bound on equilateral stick number by 2. The 11-stick $10_{84}$ improves the best previous bounds on both stick number and equilateral stick number by 3. Each knot is shown in orthographic perspective from the direction $(3,1,1)$.}
	\label{fig:big improvements}
\end{figure}

The new bounds on equilateral stick number for the knots $10_8$, $10_{16}$, $10_{39}$, $10_{64}$, $10_{73}$, $10_{105}$, $10_{110}$, and $10_{117}$ in \autoref{thm:new bounds} are 2 smaller than the previous best bounds due to Rawdon and Scharein, and for $10_{84}$ (see \autoref{fig:big improvements}) the new bound is 3 smaller. These substantial improvements in bounds for certain knots, coupled with the fact that Rawdon and Scharein's bounds are superior to ours for the knots $9_{16}$, $10_{1}$, $10_{109}$, $10_{113}$, $10_{114}$, and $10_{123}$, gives a sense of the differences between their approach and ours, which we view as complementary.

\subsection*{Organization}

In building up to Theorems~\ref{thm:stick numbers} and~\ref{thm:new bounds}, we start with background on stick number and equilateral stick number in \autoref{sec:background} and a review of new approaches to sampling random polygons using symplectic geometry in \autoref{sec:symplectic}. The latter section may be of independent interest, since it gives a concise and reasonably self-contained exposition of the symplectic approach to sampling confined polygons introduced in~\cite{Cantarella:2016iy}. We describe our computational pipeline in \autoref{sec:pipeline} and give a more detailed statement of our results in \autoref{sec:results}, which is supplemented by the tables in the appendices. Finally, we pose some questions and mention some directions for future inquiry in \autoref{sec:conclusion}.

This is work that grew out of the first author's master's thesis~\cite{Eddy:2019uj}, which was based on smaller ensembles of random knots.

\section{Stick Number}\label{sec:background}

A tame, piecewise-linear embedding $S^1 \hookrightarrow \R^3$ is called a \emph{polygonal knot} or a \emph{stick knot}. Any stick knot consists of a finite number of straight segments. Define the \emph{stick number} of a knot $K$, denoted $\stick(K)$, to be the minimum number of linear segments necessary to construct a polygonal version of $K$. The \emph{equilateral stick number}, denoted $\eqstick(K)$, is the minimum number of congruent segments necessary to construct a polygonal version of $K$, and of course $\eqstick(K) \geq \stick(K)$ for any knot type $K$. By construction the stick number and equilateral stick number are knot invariants, though not ones that are generally easy to compute. The stick number gives some measure of the inherent \emph{geometric} (as opposed to purely topological) complexity of a knot, and displays similar behavior as other geometric knot invariants like rope length. For example, the $8_{19}$ knot has shorter rope length than any 7-crossing knot~\cite{Ashton:2011du}, and it also has smaller stick number than any 7-crossing knot (see \autoref{tab:stick numbers}).

The stick number invariant was defined by Randell~\cite{Randell:1994bx}, who showed that $\stick(3_1) = 6$ and $\stick(4_1) = 7$. Randell also showed that every other nontrivial knot must have stick number at least 8, which was an early example of a (in this case lower) bound on stick number. The best general bounds on stick number come from combining work of Negami~\cite{Negami:1991gb}, Huh and Oh~\cite{Huh:2011co}, and Calvo~\cite{Calvo:2001gv}:
\begin{theorem}[Negami~\cite{Negami:1991gb}, Huh and Oh~\cite{Huh:2011co}, and Calvo~\cite{Calvo:2001gv}]\label{thm:stick inequalities}
	For a nontrivial knot $K$ with crossing number $c(K)$,
\[
	\frac{7+\sqrt{8 c(K)+1}}{2} \leq \stick(K) \leq \frac{3}{2}(c(K)+1).
\]
The upper bound can be improved to $\frac{3}{2}c(K)$ for non-alternating prime knots. 
\end{theorem}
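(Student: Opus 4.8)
The plan is to treat the two inequalities separately, since they point in opposite directions and require genuinely different techniques: the lower bound is proved by projecting a minimal stick representation and counting crossings, while the upper bound is proved by explicitly constructing an economical polygon from a minimal diagram.

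For the lower bound I would fix a representation of $K$ using exactly $s = \stick(K)$ segments and analyze its regular projections to a generic plane. Two straight segments meet in at most one point, and two segments sharing an endpoint never cross in their interiors, so of the $\binom{s}{2}$ pairs of edges at most $\binom{s}{2} - s = \frac{s(s-3)}{2}$ can produce a crossing. This naive adjacency count already yields a bound, but to reach the sharp constant in $\frac{7 + \sqrt{8c(K)+1}}{2}$ one must show that a projection direction can be chosen so that the resulting diagram has at most $\binom{s-3}{2} = \frac{(s-3)(s-4)}{2}$ crossings; since the crossing number is the minimum over all diagrams, this gives $c(K) \le \binom{s-3}{2}$. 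The refinement rules out an additional $2(s-3)$ of the a priori possible crossings using the global structure of a closed polygon (for instance, the edges incident to the extreme vertices of the projection cannot realize all of their potential crossings). Solving the quadratic inequality $2c(K) \le s^2 - 7s + 12$ for $s$ then gives
\[
\stick(K) = s \ge \frac{7 + \sqrt{8c(K) + 1}}{2},
\]
and one checks this is tight at $3_1$ and $4_1$, recovering Randell's values $6$ and $7$.

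For the upper bound I would start from a minimal crossing diagram $D$ of $K$, viewed as a $4$-valent planar graph with $c(K)$ vertices, and build a stick knot realizing $K$ directly from $D$. A naive realization spends about two sticks per crossing; the content of the Huh--Oh bound is a more global construction in which, on average, three sticks suffice for every two crossings, with the additive term absorbing the cost of closing the polygon up, so that $\stick(K) \le \frac{3}{2}(c(K)+1)$. For a non-alternating prime knot one may assume $D$ is reduced (no nugatory crossings) and exploit the non-alternating structure to save the final stick, improving the bound to $\frac{3}{2}c(K)$ as in Calvo's argument. Here too one verifies tightness at $3_1$, where $\frac{3}{2}(c(K)+1) = 6 = \stick(3_1)$.

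The main obstacle is the upper bound construction. Producing the naive estimate $\stick(K) \le 2c(K)$ from a diagram is routine, but achieving the $3/2$ ratio requires a careful, essentially algorithmic polygonalization that reuses sticks across consecutive crossings while still correctly recording every over/under choice of $D$, and the stick count must be bounded uniformly over all diagrams. The sharp constant in the lower bound is the secondary difficulty: the elementary adjacency count is immediate, but eliminating the extra $2(s-3)$ crossings to obtain exactly $\binom{s-3}{2}$, and hence the leading $7$ rather than $5$, needs the more delicate extremal-projection argument.
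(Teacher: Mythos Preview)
The paper does not prove this theorem at all: it is stated purely as a cited background result, with the lower bound attributed to Negami and Calvo and the upper bound to Huh--Oh, and no argument is given. So there is no ``paper's own proof'' against which to compare your proposal.

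Assessing your sketch on its own merits: the lower-bound outline is on the right track. You correctly identify that the target inequality is $c(K)\le\binom{s-3}{2}$, that the naive non-adjacency count only gives $\binom{s}{2}-s=\frac{s(s-3)}{2}$, and that the gap of $2(s-3)$ crossings must be removed by a sharper projection argument. What you have not supplied is that argument itself; ``the edges incident to the extreme vertices cannot realize all of their potential crossings'' is a gesture rather than a proof, and making it precise is exactly the content of Calvo's improvement over Negami.

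The upper-bound portion is not a proof sketch but a restatement of the goal. Saying that ``on average three sticks suffice for every two crossings'' is just a paraphrase of the inequality $\stick(K)\le\frac{3}{2}(c(K)+1)$; you give no construction, no induction scheme, and no mechanism for how a stick can serve two crossings at once. Since you yourself flag this as the main obstacle and then do not address it, this half of the proposal has a genuine gap. A small attribution point: Calvo's contribution is to the \emph{lower} bound; the $\frac{3}{2}(c(K)+1)$ upper bound and its non-alternating sharpening to $\frac{3}{2}c(K)$ are both due to Huh and Oh, so ``as in Calvo's argument'' is misplaced.
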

The lower bound also applies to the equilateral stick number $\eqstick(K) \geq \stick(K)$, but the best known theoretical upper bound, due to Kim, No, and Oh~\cite{Kim:2014du}, is somewhat worse: $\eqstick(K) \leq 2c(K)+2$ in general, and $\eqstick(K)\leq 2c(K)-2$ for non-alternating prime knots.

These bounds are typically quite loose: both inequalities in \autoref{thm:stick inequalities} are tight for the trefoil and figure eight knots, and the lower bound is tight for the $8_{19}$ and $8_{20}$ knots, but otherwise none of these bounds on $\stick(K)$ and $\eqstick(K)$ are saturated for any prime knot with 10 or fewer crossings, and it would be somewhat surprising if they were ever saturated by any more complicated knots. Tighter bounds can be proved for more restricted classes of knots. For example:

\begin{theorem}[Huh, No, and Oh~\cite{Huh:2011gp}]\label{thm:2-bridge bound}
	For any $2$-bridge knot $K$ with $c(K) \geq 6$,
	\[
		\stick(K) \leq c(K) + 2.
	\]
\end{theorem}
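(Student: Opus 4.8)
The plan is to realize the canonical alternating diagram of $K$ directly as a polygonal curve and count edges. Every $2$-bridge knot admits a Conway normal form with continued fraction expansion $[a_1, a_2, \ldots, a_n]$ having positive integer entries; since $2$-bridge knots are alternating and this diagram is reduced, its crossing number is $c(K) = \sum_i a_i$. The diagram is built from $n$ twist regions arranged along a $4$-plat, with $a_i$ crossings in the $i$th region and the regions alternating between a ``horizontal'' and a ``vertical'' slot. I would take this combinatorial template as the target and build a closed polygon in $\R^3$ whose projection is isotopic to this diagram.

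The core geometric idea is that a twist region with $k$ crossings can be produced cheaply with straight segments. Two straight segments meet in at most one crossing, so I would route one strand of a twist region as a single long segment (an ``axis'') and let the partner strand zigzag across it, each edge of the zigzag contributing one crossing on an alternating side; this realizes $k$ crossings using on the order of $k$ edges rather than $2k$. Summed over all twist regions the crossings cost $\sum_i a_i = c(K)$ edges, and the remaining budget of two edges must suffice to join consecutive twist regions and close the curve. The alternating horizontal/vertical structure is what makes this affordable: the zigzag strand of one region can continue as the axis of the adjacent region, so axes are never paid for separately. I would fix explicit coordinates placing successive regions in a staircase and reusing each turning edge as both the end of one region's zigzag and the start of the next.

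First I would verify the \emph{local} claim, that each twist region's crossings appear with the over/under signs dictated by the alternating pattern, which reduces to a consistent assignment of heights ($z$-coordinates) to the strands. Next I would check \emph{global} embeddedness: that no two non-adjacent edges intersect and that no crossings appear beyond those intended, so that the planar projection is exactly the target diagram and hence represents $K$. Finally I would tally the edges and confirm the total is at most $c(K)+2$.

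The main obstacle is pinning the additive constant to exactly $2$ rather than $3$. Indeed, the naive version of this construction gives only $c(K)+3$, consistent with $\stick(3_1)=6$ and $\stick(4_1)=7$, so the improvement to $c(K)+2$ must exploit structure available only once there are enough twist regions, which is precisely why the hypothesis $c(K)\ge 6$ appears. The delicate step is therefore the bookkeeping at the junctions between alternating twist regions: showing that for $n$ large enough one can always merge the connecting arcs into the zigzags so that exactly two ``extra'' closing edges, and no more, are needed. I would expect to establish the general count by induction on the number of twist regions, proving that each added twist costs exactly one additional edge, while treating the handful of smallest cases by hand, with a possible parity distinction according to $n$.
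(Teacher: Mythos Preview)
The paper does not prove this theorem; it merely quotes it as a known result of Huh, No, and Oh~\cite{Huh:2011gp}, so there is no in-paper argument to compare your proposal against. Any actual comparison would have to be made with the original Huh--No--Oh paper.

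That said, your plan is the right \emph{shape}: the original proof is indeed a direct construction from the Conway normal form of a $2$-bridge knot, realizing each twist region with one straight ``axis'' strand and a zigzagging partner, and then reusing endpoints across adjacent regions. Where your proposal stops short is exactly where you flag it yourself. You acknowledge that a naive version yields only $c(K)+3$, and you offer only the expectation that ``for $n$ large enough one can always merge the connecting arcs,'' to be handled ``by induction'' with small cases ``by hand.'' That is not yet a proof: the entire content of the theorem is precisely this bookkeeping that saves the last edge, and it is not obvious without writing down explicit coordinates and checking both the edge count and global embeddedness in $\R^3$. In particular, your induction sketch (``each added twist costs exactly one additional edge'') does not by itself explain why the base of the induction sits at $c(K)+2$ rather than $c(K)+3$; the savings must come from a specific sharing of edges at the ends of the $4$-plat, and you have not exhibited it.

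So: the strategy is sound and almost certainly aligned with the cited source, but as written this is a plan rather than a proof, with the decisive step---the explicit construction achieving the constant $+2$ and the verification that it is embedded with the correct diagram---left undone.
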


This bound is sharp for all 6- and 7-crossing knots, and is the best upper bound known for the knots $8_{1-4}$, $8_{6-9}$, $8_{11-14}$, $9_6$, $9_{18}$, $9_{23}$, and $10_{37}$, as well as for essentially all 2-bridge knots with more than 10 crossings except $11a_{191}$, which has 11-stick realizations, and $11a_{77, 89, 90, 110, 111, 117, 140, 154, 166, 174, 177, 178, 180, 190, 203, 224, 247}$ and $12a_{537}$, for which we have observed 12-stick examples.

Even better than an upper bound, the exact stick numbers are known for an infinite collection of torus knots:

\begin{theorem}[Adams et al.~\cite{Adams:1997gb}, Jin~\cite{Jin:1997da}, Bennett~\cite{Bennett:2008th}, Adams and Shayler~\cite{Adams:2009kp}, and Johnson et al.~\cite{Johnson:2013ko}]\label{thm:torus bound}
	For relatively prime $2 \leq p < q < 3p$, the stick number of the $(q,p)$-torus knot is $2q$ if $q<2p$ and $4p$ if $2p<q<3p$.
\end{theorem}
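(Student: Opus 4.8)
The plan is to prove matching lower and upper bounds, since this is an exact determination of the stick number. The lower bound is the more conceptual half and follows from the superbridge index, while the upper bound requires explicit polygonal constructions adapted to the two regimes $q < 2p$ and $2p < q < 3p$.

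For the lower bound I would invoke the general inequality $\stick(K) \ge 2\,\superbridge(K)$. This comes from the observation that for any fixed $n$-stick realization of $K$ and any generic height function, the number of local maxima is at most $\lfloor n/2\rfloor$ (a closed $n$-gon has $n$ vertices, and its maxima and minima alternate and are equinumerous); taking the maximum over directions bounds the superbridge number of that conformation by $\lfloor n/2\rfloor$, and minimizing over conformations gives $\superbridge(K)\le\lfloor \stick(K)/2\rfloor$, hence $\stick(K)\ge 2\,\superbridge(K)$. I would then combine this with Kuiper's computation of the superbridge index of torus knots, $\superbridge(T(p,q))=\min(q,2p)$ for $2\le p<q$ (see also Jin~\cite{Jin:1997da}). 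In the regime $q<2p$ this yields $\stick\ge 2q$, and in the regime $2p<q<3p$ (indeed for all $q>2p$) it yields $\stick\ge 4p$. These are exactly the target values, so no sharpening of the lower bound is needed.

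For the upper bound I would exhibit explicit stick realizations inscribed in the standard torus meeting each bound. When $q<2p$, the goal is a $2q$-stick polygon: place $q$ vertices near the top rim and $q$ near the bottom rim of a cylinder, distributed so that the connecting edges advance $q$ times longitudinally while winding $p$ times meridionally, realizing $T(p,q)$ with exactly $2q$ edges. When $2p<q<3p$, a $2q$-stick zigzag no longer suffices, and one instead builds a $4p$-stick polygon using a construction organized around the $p$ meridional wraps rather than the $q$ longitudinal ones. The constraint $q<3p$ is precisely what allows these polygons to close up into the correct knot type with the stated number of sticks; outside this range the superbridge lower bound is no longer achievable and the stick number is strictly larger.

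The main obstacle is the upper-bound construction in the second regime: verifying that the $4p$-stick polygon has the right knot type and is genuinely embedded, and pinning down why $q<3p$ is the exact threshold. I would identify the knot type either by reading off the permutation (or braid word) induced by the edges, or by recovering the standard $(p,q)$ diagram from a suitable projection, and I would confirm embeddedness by a general-position argument on the chosen vertex coordinates. The lower bound, by contrast, is essentially immediate once Kuiper's superbridge formula is taken as input.
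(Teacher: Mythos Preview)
The paper does not prove this theorem; it is stated as a cited result from the literature (Adams et al., Jin, Bennett, Adams--Shayler, Johnson et al.) and used only to read off $\stick(8_{19})=8$, $\stick(10_{124})=10$, etc. There is therefore no proof in the paper to compare against.

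That said, your outline matches the actual proofs in the cited sources. The lower bound is exactly Jin's argument: Randell's inequality $\stick(K)\ge 2\,\superbridge(K)$ (stated in the paper as \autoref{thm:superbridge bound}) combined with Kuiper's formula $\superbridge(T(p,q))=\min(2p,q)$ gives $2q$ when $q<2p$ and $4p$ when $q>2p$. For the upper bound in the first regime, Jin and Adams et al.\ give the $2q$-vertex zigzag construction you describe. The $4p$-stick construction in the second regime is the content of Johnson et al., and you are right that verifying embeddedness and knot type there is the nontrivial part; the threshold $q<3p$ arises because the construction uses four vertices per meridional wrap, and three longitudinal strands can be threaded through each ``slot'' but not more. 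Your proposal is a faithful sketch of the published arguments, with the expected gap being the explicit coordinates and embeddedness check for the $4p$ case.
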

In particular, it follows from \autoref{thm:torus bound} that $\stick(8_{19})=8$, $\stick(10_{124})=\stick(15n_{41,185})=10$, and ${\stick(14n_{21,881})=\stick(16n_{783,154})=12}$.

In a different direction, Calvo~\cite{Calvo:1998uj,Calvo:2002iq} did a deep analysis of the space of 8-stick knots, showing:

\begin{theorem}[Calvo~\cite{Calvo:1998uj,Calvo:2002iq}]\label{thm:8sticks}
	The only knots with $\stick(K) \leq 8$ are $3_1$, $4_1$, $5_1$, $5_2$, $6_1$, $6_2$, $6_3$, $8_{19}$, $8_{20}$, $3_1 \# 3_1$, and $3_1 \# -\!3_1$.
\end{theorem}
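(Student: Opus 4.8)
The plan is to prove the two inclusions separately: to exhibit explicit eight-stick polygons realizing each of the eleven listed types (the constructive ``upper bound'' direction), and to show that no eight-stick polygon realizes any other knot type (the ``lower bound'' direction, which is the real content). Since Randell's theorem identifies $3_1$ and $4_1$ as the only nontrivial knots with stick number at most $7$, it suffices to understand polygons with exactly eight edges. First I would prune the candidate knot types using topological invariants. Applying the lower bound of \thm{stick inequalities} with $\stick(K)\le 8$ forces $c(K)\le 10$, so only finitely many prime knots --- together with their connected sums --- can possibly occur. Moreover, the restriction of any generic linear height function to an octagon is a piecewise-linear function on $S^1$ whose critical points are among the eight vertices, so it has at most four local maxima; hence the superbridge index satisfies $\superbridge(K)\le 4$ and the bridge index is at most $4$ as well. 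Writing down coordinates for eight-stick realizations of $5_1$, $5_2$, $6_1$, $6_2$, $6_3$, $8_{19}$, $8_{20}$, $3_1 \# 3_1$, and $3_1 \# {-3_1}$ and verifying their types then finishes the upper-bound direction.

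The crux is ruling out everything else --- all seven-crossing knots and all composites other than the two connected sums of trefoils --- since many of these survive the crude pruning above (for instance, every seven-crossing knot is $2$-bridge with crossing number $\le 10$). For this I would study the configuration space $\Pol(8)$ of octagons directly, stratifying it by the combinatorial type of the generic planar shadow, namely the cyclic arrangement of the eight projected edges together with the over/under datum at each crossing. On each connected component of the embedded (non-self-intersecting) octagons the knot type is constant, changing only when a path crosses the discriminant of degenerate configurations; the problem therefore reduces to enumerating the combinatorial shadow types realizable by a straight-edged octagon and reading off which knots they carry.

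The main obstacle, and the technical heart of the argument, is controlling this enumeration, because realizability by \emph{rigid} segments is far more restrictive than an abstract Gauss code. The number of crossings is bounded by the $8\cdot 5/2 = 20$ pairs of non-adjacent edges, but consecutive edges share endpoints, each straight edge meets the rest of the polygon in crossings whose cyclic order along it is forced by linearity, and the $\superbridge(K)\le 4$ bound caps the height complexity; I would use exactly these constraints to show that the only Gauss codes compatible with an eight-edge polygon belong to the listed knots. Proving non-existence here is delicate, since one must exclude an entire high-dimensional stratum rather than produce a single invariant-based obstruction. The composite case can be isolated separately: a decomposing sphere meets a composite octagon in two points, splitting it into two arcs that close up to polygons bounding the stick numbers of the summands, and an edge count shows that two nontrivial factors fit into eight sticks only as a pair of trefoils, while any summand with $c \ge 4$, or any sum of three nontrivial factors, cannot.
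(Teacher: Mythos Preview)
The paper does not prove \thm{8sticks}: it is quoted, with attribution to Calvo's thesis and a follow-up paper, purely as background, and no argument is reproduced. There is thus no in-paper proof to compare your proposal against.

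Judged on its own, your outline has the right shape, and you correctly note that invariant pruning ($c(K)\le 10$ from \thm{stick inequalities}, $\superbridge(K)\le 4$ from \thm{superbridge bound}, hence bridge index $\le 3$ by Kuiper's strict inequality) still leaves many candidates---all seven-crossing knots among them---so a direct analysis of octagon space is unavoidable. But that analysis \emph{is} Calvo's theorem: your proposal names the target (enumerate shadow types realizable by straight-edge octagons and read off the knot) without supplying a mechanism to carry it out, and Calvo's actual argument is a lengthy geometric case analysis of octagon space rather than a Gauss-code enumeration. What you have written accurately locates the difficulty but does not offer a way through it. The composite step is closer to complete but also needs care: after reducing a decomposing sphere to two transverse points on edge interiors and closing each arc with a chord, you must verify that the chord can be chosen so that the closed polygon really has the knot type of the summand (a straight segment need not stay on one side of the sphere), and do the edge bookkeeping honestly; done carefully this does force both summands to be trefoils, but not as automatically as your sketch suggests.
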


Said another way, $\stick(K)\geq 9$ for any knot $K$ not in the above list, and if we encounter such a knot which can be realized as an equilateral 9-gon, then we can immediately conclude that ${\stick(K) = \eqstick(K)=9}$.

In practice, most useful upper bounds on the stick numbers of knots come from examples: after all, if you can find a representative of a knot type $K$ with $n$ sticks, then certainly $\stick(K) \leq n$. Meissen~\cite{Meissen:1998wu} found 9-stick representatives of all 7-crossing knots which, combined with Calvo's theorem, implies these knots all have stick number 9, and Millett~\cite{Millett:1994fo,Millett:2000fe} and Calvo and Millett~\cite{Calvo:1998kr} sampled millions of random polygons to find representatives of most knots up to 9 crossings with relatively low stick numbers.

Rob Scharein obtained much better results in his thesis~\cite{Scharein:1998tu} using a combination of stochastic displacements of vertex positions and opportunistic deletions. Rawdon and Scharein~\cite{Rawdon:2002wj} extended this approach to the equilateral stick number, and the Scharein and Rawdon--Scharein bounds have remained the gold standard for almost two decades, especially since they are easily referenced online at \url{http://www.colab.sfu.ca/KnotPlot/sticknumbers/} and examples achieving the bounds are built into Scharein's software {\tt KnotPlot}~\cite{knotplot}.

Despite his protestations that he had ``not been greatly concerned with efficiency''~\cite[p.~146]{Scharein:1998tu}, Scharein's approach gave better results than Calvo and Millett's Monte Carlo explorations because it was more efficient: the subset of all 9-stick knots which form, say, a $9_{35}$ knot is so tiny that one needs a truly huge number of random samples to encounter a point in it, whereas Scharein was starting from a conformation known to be a $9_{35}$ and optimizing it through knot-type-preserving modifications.

However, recent advances in (equilateral) polygon sampling algorithms make it possible to uniformly sample random equilateral stick knots in very tight confinement, and to do so quickly enough that generating hundreds of billions of samples is now feasible. Hence, this is a good time to revisit the Monte Carlo approach to finding upper bounds on stick numbers.

\section{Symplectic Geometry and Sampling Confined Polygons}\label{sec:symplectic}

%Quick review of TSMCMC for confined polygons. Short!

Stick knots -- which is to say, piecewise-linear embeddings of the circle in 3-space -- are of interest in polymer physics and molecular biology, where they appear as models of ring polymers in solution or in melt. The survey~\cite{Orlandini:2007kn} gives a nice overview of applications of these models in physics and biology. In this context a piecewise-linear embedding of a circle is usually called a \emph{polygon} and there has been a lot of work done on developing efficient algorithms for sampling random polygons. Since a polygon is just a stick knot, we can use such an algorithm to generate large ensembles of random stick knots and look for examples with fewer sticks than previously observed.

The algorithm we will use is based on understanding the geometry of the space of all possible conformations of equilateral polygons up to rigid motions. To get nice coordinates on this space, we will interpret an $n$-gon as an ordered collection of edge vectors $\vec{e}_1, \dots , \vec{e}_n \in \R^3$ rather than as a list of vertices. In particular, this gives a translation-invariant representation of a polygon. 

A list of vectors forms a polygon (as opposed to an open chain) if and only if $\vec{e}_1 + \dots + \vec{e}_n = \vec{0}$, and the polygon is equilateral if $\|\vec{e}_i\|=1$ for all $i=1,\dots,n$. Hence, using $S^2$ to denote the unit sphere in $\R^3$, we will identify the collection of equilateral $n$-gons up to translation as the subset of $\underbrace{S^2 \times \cdots \times S^2}_n$ consisting of those $n$-tuples of unit vectors which sum to zero.

This condition is clearly preserved by the diagonal action of the rotation group $SO(3)$, and passing to the quotient produces the space $\Pol(n)$ of equilateral $n$-gons in $\R^3$ up to rigid motions. When $n$ is even this $SO(3)$ action is not quite free -- degenerate polygons which lie on a line are fixed by any rotation around the line -- so $\Pol(n)$ is not always a manifold, but it has at worst finitely many isolated singular points. Moreover, this construction turns out to be nothing more than a symplectic reduction of $S^2 \times \cdots \times S^2$ by the diagonal $SO(3)$ action~\cite{Kapovich:1996wo}, and hence $\Pol(n)$ has a natural symplectic structure induced by the standard symplectic structure on $S^2 \times \cdots \times S^2$.

In fact, $\Pol(n)$ is more than merely symplectic or even K\"ahler: an open, dense subset $\Pol(n)^\circ$ of $\Pol(n)$ is toric~\cite{Kapovich:1996wo}. A simple dimension count shows that $\dim \Pol(n) = 2n-6=2(n-3)$ since $\vec{e}_1 + \dots + \vec{e}_n = \vec{0}$ is a codimension-3 condition and $SO(3)$ is 3-dimensional, so being toric means that there is a Hamiltonian $(n-3)$-torus action on $\Pol(n)^\circ$ with corresponding moment map $\mu: \Pol(n)^\circ \to \R^{n-3}$. Consequently, so-called \emph{action-angle coordinates} give a global coordinate system on $\Pol(n)^\circ$ which is well-adapted to random sampling.

Specifically, a choice of triangulation of an abstract $n$-gon -- which is to say, a choice of $n-3$ non-intersecting segments connecting non-adjacent vertices of an abstract, convex, planar $n$-gon -- induces an action of the $(n-3)$-torus. In what follows, we focus on the \emph{fan triangulation} shown in \autoref{fig:fan triangulation}, which connects all non-adjacent vertices to a chosen root vertex. 

\begin{figure}[t]
	\centering
		\begingroup
		\setlength{\unitlength}{2.75in}
	    \begin{picture}(1,0.75)
	      \put(0,0){\includegraphics[width=\unitlength]{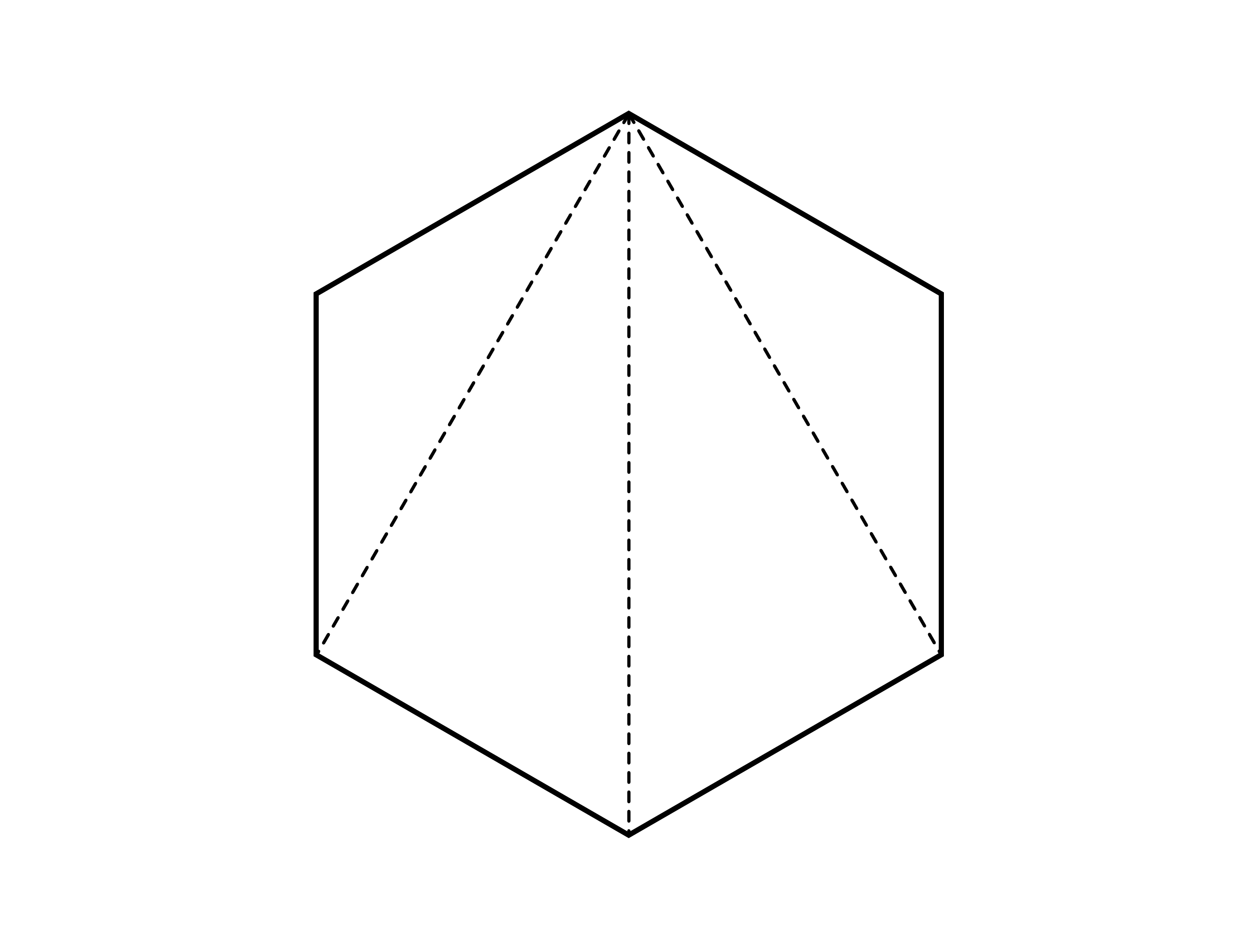}}
	    \end{picture}
	    \begin{picture}(1,0.75)
	      \put(0,0){\includegraphics[width=\unitlength]{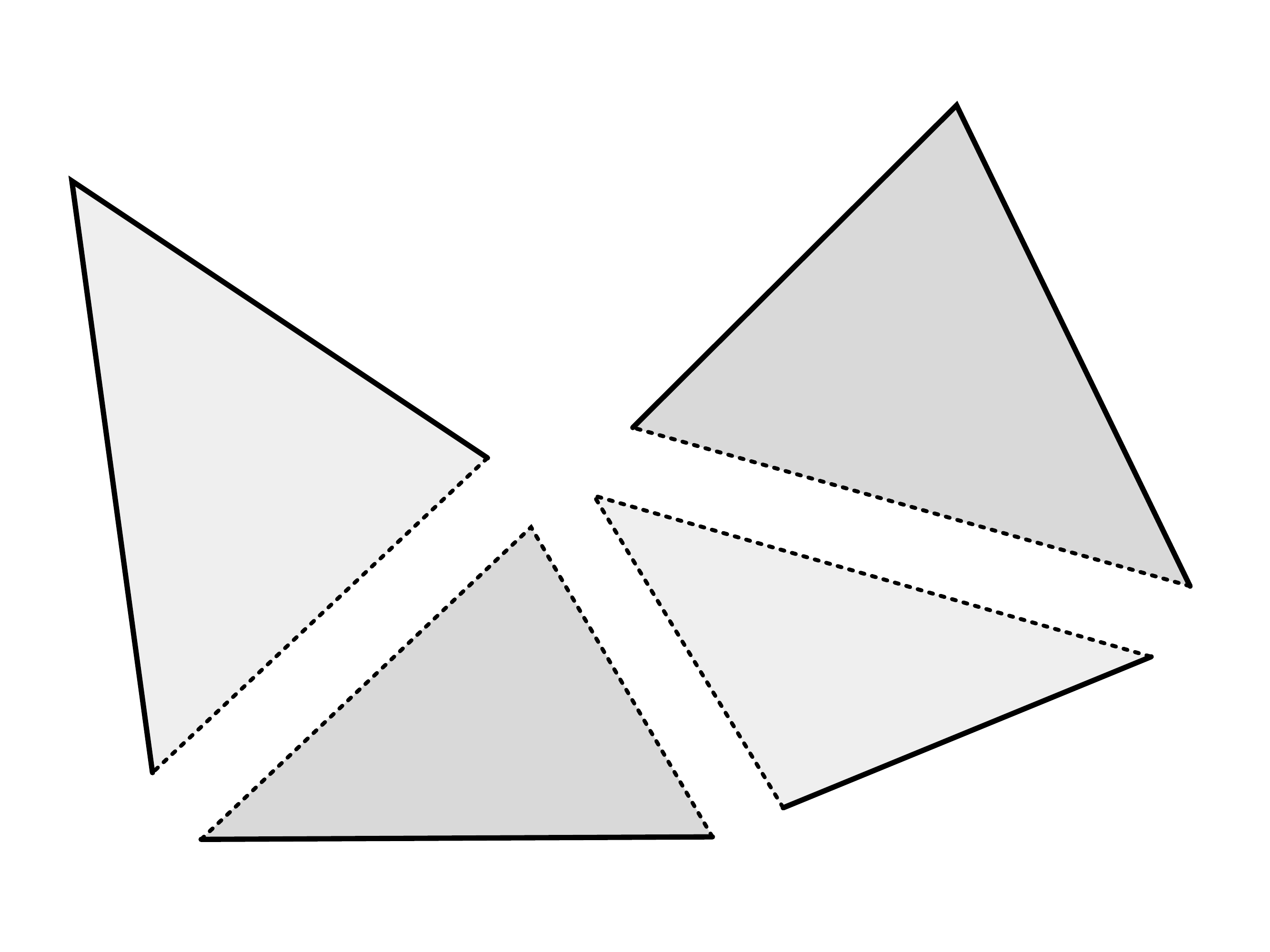}}
	      \put(0.41,0.38){\smash{$v_1$}}
	      \put(0.74,0.69){\smash{$v_2$}}
	      \put(0.92,0.245){\smash{$v_3$}}
	      \put(0.58,0.08){\smash{$v_4$}}
	      \put(0.1,0.1){\smash{$v_5$}}
	      \put(0.02,0.63){\smash{$v_6$}}
	      \put(0.67,0.31){\smash{$d_1$}}
	      \put(0.49,0.22){\smash{$d_2$}}
	      \put(0.24,0.225){\smash{$d_3$}}
	    \end{picture}
	    \begin{picture}(1,0.75)
	      \put(0,0){\includegraphics[width=\unitlength]{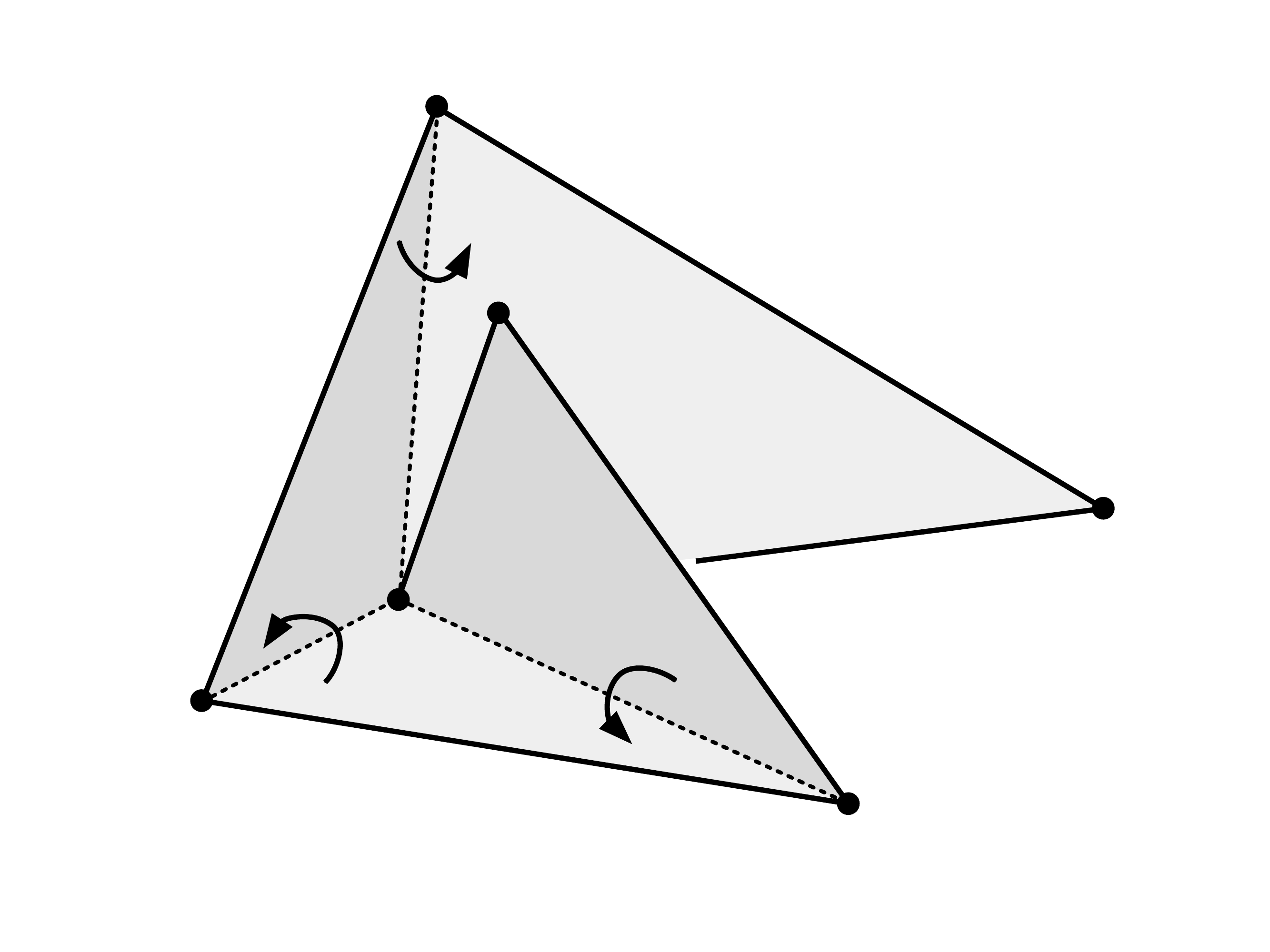}}
	      \put(0.3,0.24){\smash{$v_1$}}
	      \put(0.4,0.52){\smash{$v_2$}}
	      \put(0.67,0.08){\smash{$v_3$}}
	      \put(0.12,0.16){\smash{$v_4$}}
	      \put(0.33,0.685){\smash{$v_5$}}
	      \put(0.88,0.32){\smash{$v_6$}}
	      \put(0.45,0.24){\smash{$\theta_1$}}
	      \put(0.23,0.28){\smash{$\theta_2$}}
	      \put(0.308,0.495){\smash{$\theta_3$}}
	    \end{picture}
	    \begin{picture}(1,0.75)
	      \put(0,0){\includegraphics[width=\unitlength]{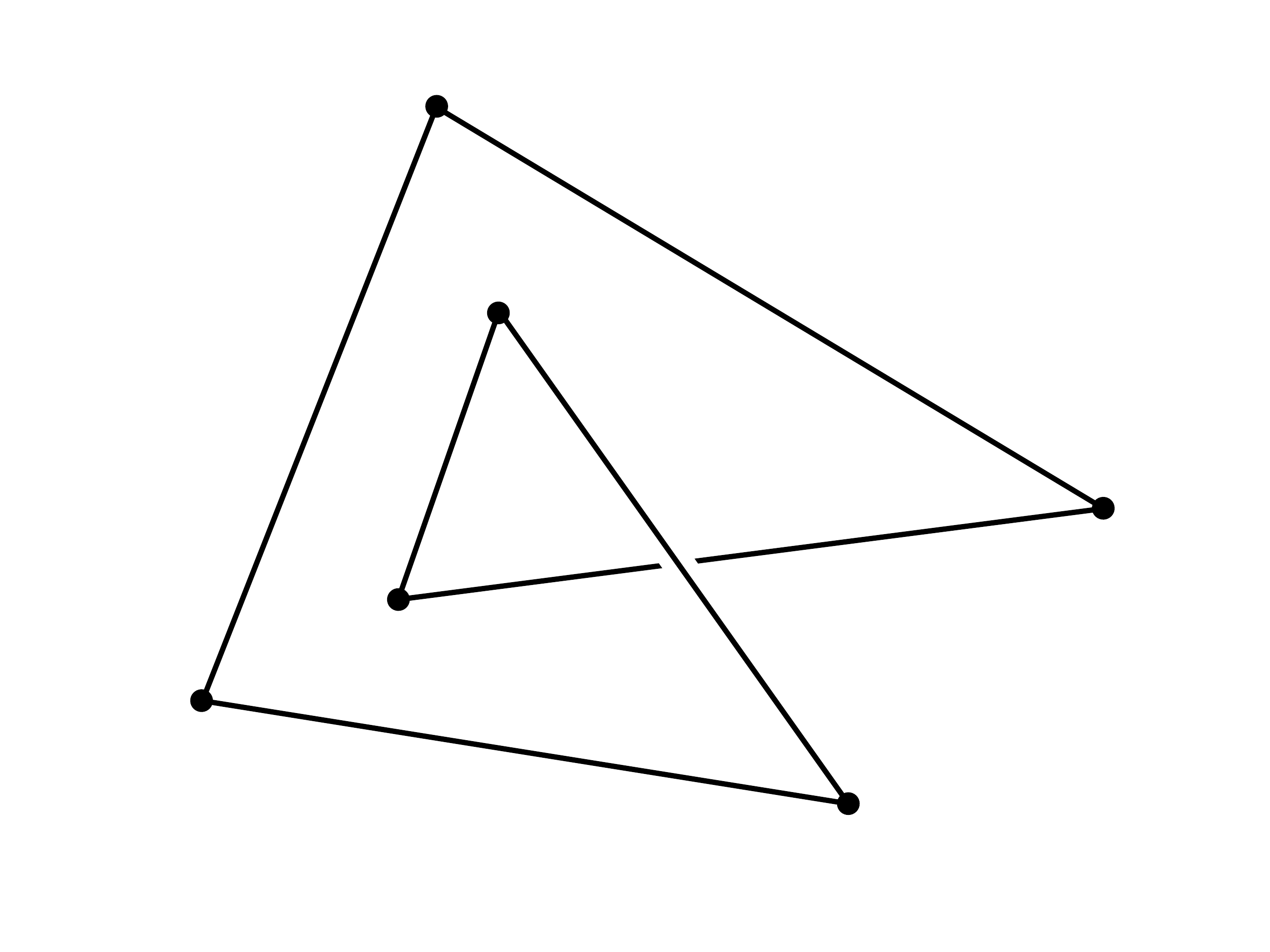}}
	      \put(0.28,0.24){\smash{$v_1$}}
	      \put(0.38,0.52){\smash{$v_2$}}
	      \put(0.67,0.08){\smash{$v_3$}}
	      \put(0.12,0.16){\smash{$v_4$}}
	      \put(0.33,0.685){\smash{$v_5$}}
	      \put(0.88,0.32){\smash{$v_6$}}
	    \end{picture}
		\endgroup
	\caption{The fan triangulation and action-angle coordinates. The $n-3$ chords determining the fan triangulation are shown top left. The $d_i$ determine the dashed lengths in the $n-2$ triangles coming from the triangulation; this, together with the solid sides, which are all unit length, uniquely determine the triangles, and the triangulation tells how to glue them together. The $\theta_i$ determine the dihedral angles of the gluings and removing the interiors of the triangles along with the dashed edges produces an $n$-gon. Starting instead from the bottom right shows how to extract the action-angle coordinates from a particular $n$-gon.}
	\label{fig:fan triangulation}
\end{figure}

Any triangulation of an $n$-gon consists of exactly $n-3$ chordal segments and produces $n-2$ triangles. It is helpful to visualize an $n$-gon in space as the boundary of a piecewise-linear surface whose faces are exactly these triangles. Each factor of the $(n-3)$-torus acts by bending the surface around one of the chords and the fact that the chords do not intersect ensures that these actions commute. 

The conserved quantities of this action are exactly the lengths  $d_1, \dots , d_{n-3}$ of the chords, and the conjugate variables $\theta_1, \dots , \theta_{n-3}$ are the dihedral angels between the two triangles meeting at a given chord. These are the \emph{action-angle coordinates} induced by the Hamiltonian torus action. When one of the $d_i = 0$, neither the circle action nor the corresponding $\theta_i$ make sense, emphasizing that the torus action and the action-angle coordinates only make sense on the open, dense subset $\Pol(n)^\circ$ for which all $d_i >0$.

There are no restrictions on the $\theta_i$, but, since the edges of the original $n$-gon are all length 1 and $d_i$ is an edge of two different triangles, the $d_i$ satisfy a collection of triangle inequalities which can be simplified as
\begin{equation}
0 \leq d_1 \leq 2 
\qquad 
\begin{matrix} 
1 \leq d_i + d_{i+1} \\
-1 \leq d_i - d_{i+1} \leq 1 
\end{matrix}
\qquad
0 \leq d_{n-3} \leq 2,
\label{eq:fan polytope}
\end{equation} 
where the inequalities in the middle column apply for $i=1,\dots,n-4$.

Let $\mathcal{P}_n \subset \R^{n-3}$ be the convex polytope determined by the inequalities in \eqref{eq:fan polytope}. Given ${(d_1, \dots , d_{n-3})\in \mathcal{P}_n}$ and any $(n-3)$-tuple of angles $(\theta_1, \dots , \theta_{n-3})$ -- which we can think of as a point in the $(n-3)$-torus $T^{n-3}$ -- we can build a unique element of $\Pol(n)$ as illustrated in \autoref{fig:fan triangulation}, first by building the $n-2$ triangles of the triangulation, and then gluing them together according to the $\theta_i$. The main result of~\cite{Cantarella:2016iy} is that, from a measure-theoretic perspective, $\mathcal{P}_n \times T^{n-3}$ and $\Pol(n)$ are equivalent:

\begin{theorem}[Cantarella and Shonkwiler~\cite{Cantarella:2016iy}]\label{thm:sampling}
	With respect to the uniform measure on $\mathcal{P}_n \times T^{n-3}$ and the natural measure on $\Pol(n)$, the reconstruction map $\mathcal{P}_n \times T^{n-3} \to \Pol(n)$ defining action-angle coordinates is measure-preserving.
\end{theorem}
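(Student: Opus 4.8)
The plan is to identify the ``natural measure'' on $\Pol(n)$ with the Liouville (symplectic volume) measure $\frac{\omega^{n-3}}{(n-3)!}$ determined by the reduced symplectic form $\omega$, and then to show that in action-angle coordinates this measure is nothing but Lebesgue measure $\mathrm{d}d_1 \cdots \mathrm{d}d_{n-3}\,\mathrm{d}\theta_1 \cdots \mathrm{d}\theta_{n-3}$, which is exactly the uniform measure on $\mathcal{P}_n \times T^{n-3}$. First I would restrict attention to the open dense toric locus $\Pol(n)^\circ$ on which all $d_i > 0$. Its complement --- configurations with a degenerate (zero-length) chord, the finitely many singular points occurring when $n$ is even, and the boundary $\partial\mathcal{P}_n$ --- is a finite union of pieces of positive codimension and hence has measure zero on both sides, so it suffices to prove that the reconstruction map preserves measure on $\Pol(n)^\circ$.

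Next I would invoke the toric structure established by Kapovich and Millson: the bending flows about the $n-3$ chords of the fan triangulation are commuting Hamiltonian circle actions whose moment map is $\mu = (d_1, \dots, d_{n-3})$, the tuple of chord lengths, with image the moment polytope $\mathcal{P}_n$ cut out by \eqref{eq:fan polytope}. The conjugate dihedral angles $\theta_i$ parametrize the torus orbits. This is precisely the data of an Arnold--Liouville (action-angle) chart on the integrable system $\Pol(n)^\circ$.

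The crux of the argument is to show that these action-angle coordinates are \emph{Darboux} coordinates, that is, that
\[
\omega \;=\; \sum_{i=1}^{n-3} \mathrm{d}d_i \wedge \mathrm{d}\theta_i
\]
on $\Pol(n)^\circ$. This is the content of the Arnold--Liouville theorem for a completely integrable system with periodic flows: once the Hamiltonians $d_i$ are confirmed to be genuine \emph{action} variables --- generators of $2\pi$-periodic flows --- the conjugate angles automatically put $\omega$ into canonical form. Taking the top exterior power then yields $\frac{\omega^{n-3}}{(n-3)!} = \pm\,\mathrm{d}d_1 \wedge \mathrm{d}\theta_1 \wedge \cdots \wedge \mathrm{d}d_{n-3} \wedge \mathrm{d}\theta_{n-3}$, the standard Lebesgue volume form, so the reconstruction map carries the uniform measure on $\mathcal{P}_n \times T^{n-3}$ to the Liouville measure on $\Pol(n)^\circ$.

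I expect the main obstacle to be exactly the normalization check inside this last step: verifying that the chord lengths $d_i$, rather than some rescaling of them, are the correct action variables, so that bending by the dihedral angle $\theta_i$ is precisely the time-$\theta_i$ Hamiltonian flow of $d_i$ with period $2\pi$ and $\omega$ acquires no extra Jacobian factor. This reduces to the classical computation of the Hamiltonian vector field of the bending flow, which I would take from Kapovich--Millson. To finish, I would remark that the reduced Liouville measure on $\Pol(n)$ agrees --- up to the overall normalization one absorbs into the phrase ``natural measure'' --- with the measure inherited from the round product measure on $S^2 \times \cdots \times S^2$ via symplectic reduction, completing the identification of the natural measure with the symplectic volume and hence the proof.
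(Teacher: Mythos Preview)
The paper under review does not actually prove this theorem: it is quoted from Cantarella--Shonkwiler~\cite{Cantarella:2016iy} and used as a black box, so there is no in-paper proof to compare against. That said, your outline is correct and is essentially the argument given in the cited source. The original proof proceeds exactly by observing that $\Pol(n)^\circ$ is a toric symplectic manifold, that the diagonal lengths $d_i$ are the components of the moment map for the bending torus action, and that in the resulting action-angle coordinates the Liouville volume form is Lebesgue measure on $\mathcal{P}_n \times T^{n-3}$; the complement of $\Pol(n)^\circ$ has measure zero for the reasons you give. Your identification of the one nontrivial verification---that each $d_i$ generates a genuinely $2\pi$-periodic flow, so no Jacobian factor appears---is exactly right, and Kapovich--Millson is the correct reference for that computation.
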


In particular this implies that any algorithm for randomly sampling points from $\mathcal{P}_n \times T^{n-3}$ gives an algorithm for sampling random equilateral $n$-gons. Since it is easy to generate independent random angles, and hence to sample $T^{n-3}$, the only challenge is to sample the convex polytope $\mathcal{P}_n$. While the paper~\cite{Cantarella:2016bt} gives an algorithm for directly sampling $\mathcal{P}_n$, this approach has not been generalized to the case of confined polygons which we are interested in, so instead we will use the hit-and-run Markov chain introduced by Boneh and Golan~\cite{Boneh:1979uy} and Smith~\cite{Smith:1984vz} for sampling arbitrary convex polytopes (see also Andersen and Diaconis' excellent survey~\cite{Andersen:2007vn}). The idea of hit-and-run is simple: given $\vec{p} \in \mathcal{P}_n$, choose a random direction $\vec{v}$; the intersection of the line through $\vec{p}$ containing the direction $\vec{v}$ with $\mathcal{P}_n$ is a connected line segment from which we choose the next point in the Markov chain uniformly at random. See \autoref{fig:hit and run}.

\begin{figure}[t]
	\centering
	\begingroup
	\setlength{\unitlength}{3in}
    \begin{picture}(1,0.75)
      \put(0,0){\includegraphics[width=\unitlength]{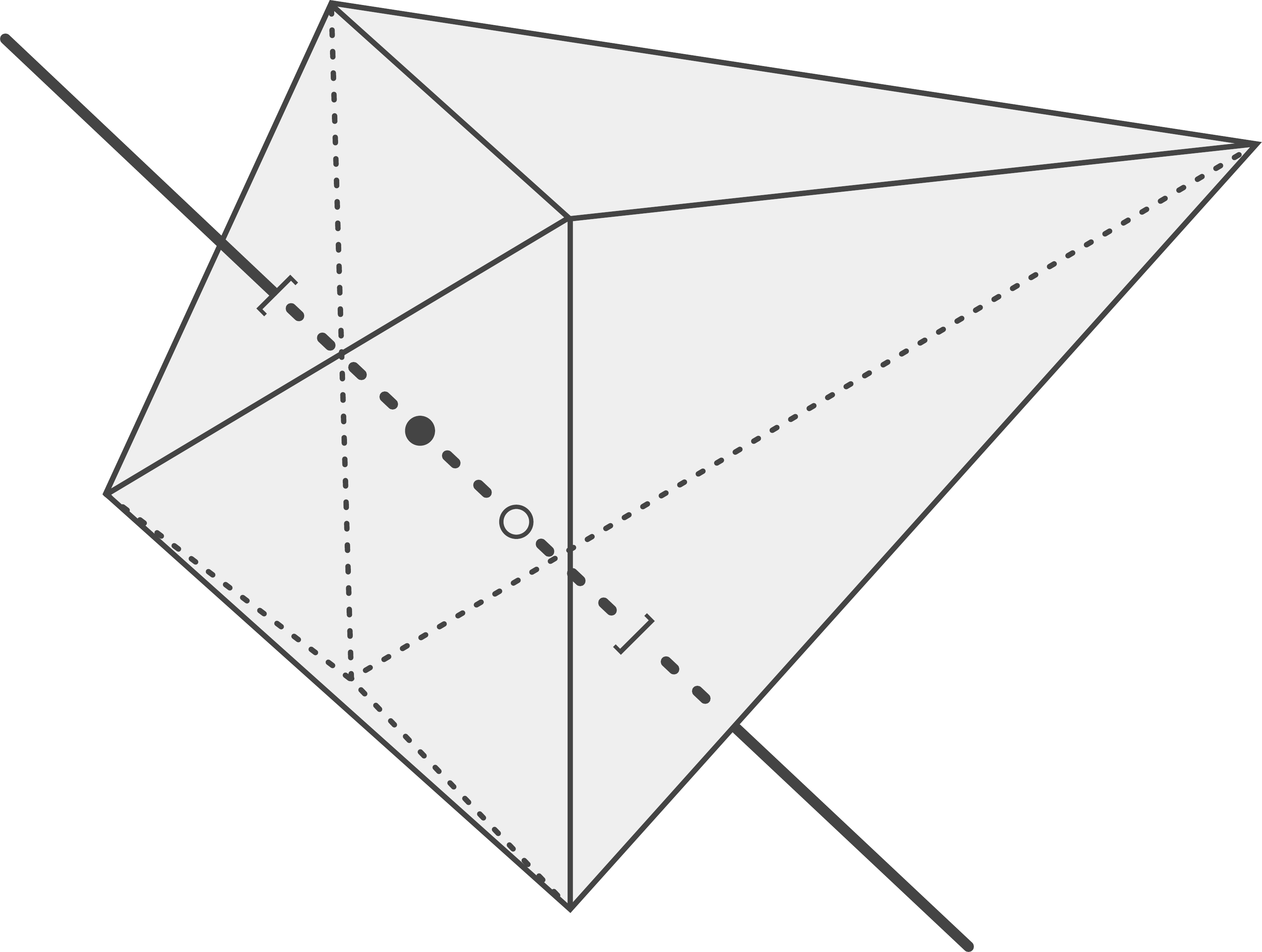}}
	  \put(0.343,0.4425){\smash{$\vec{p}$}}
	  \put(0.42,0.37){\smash{$\vec{p}+t \vec{v}$}}
	  \put(0.215,0.55){\smash{$t_0$}}
	  \put(0.515,0.275){\smash{$t_1$}}
    \end{picture}
	\endgroup
	\caption{A single hit-and-run step starting from a point $\vec{p} \in \mathcal{P}_6$ ($\bullet$). Choosing a random direction produces a unique line through $\vec{p}$ which intersects $\mathcal{P}_6$ at the points $\vec{p}+t_0 \vec{v}$ and $\vec{p}+t_1 \vec{v}$. Choosing $t$ uniformly on $[t_0,t_1]$ produces the next step $\vec{p} + t \vec{v}$ ($\circ$) in the Markov chain.}
	\label{fig:hit and run}
\end{figure}

To get an ergodic Markov chain on $\mathcal{P}_n \times T^{n-3}$ (and hence on $\Pol(n)$), we will mix hit-and-run steps on $\mathcal{P}_n$ and random samples from $T^{n-3}$ as follows. At each step, flip a coin: if heads, iterate hit-and-run 10 times on $\mathcal{P}_n$;\footnote{At least in high dimensions, a single hit-and-run step is unlikely to move very far, so it is preferable to do several steps at once, though not too many since hit-and-run steps are much more expensive than sampling from the torus. Our experience is that 10 steps provides a good balance.} if tails, randomly sample a new point $(\theta_1, \dots , \theta_{n-3})$ from $T^{n-3}$. This algorithm was dubbed \emph{Toric Symplectic Markov Chain Monte Carlo} (TSMCMC) and proved to be ergodic in~\cite{Cantarella:2016iy}.

Unfortunately, knots are quite rare among $n$-gons when $n$ is small.\footnote{The best theoretical result along these lines is due to Hake~\cite{Hake:2019cz}, who proved that the probability that a random equilateral hexagon is knotted is bounded above by $\frac{14-3\pi}{192}<\frac{1}{42}$; empirically, the fraction of nontrivial knots among random hexagons is close to $\frac{1}{10,000}$.} For example, we generated 1~billion random equilateral 10-gons using the TSMCMC procedure described above and only found 6,315,897 (0.63\%) nontrivial knots, including only three samples which were 10-crossing prime knots; given the ergodicity of TSMCMC, this gives a decent estimate for the true probability of these knots. For comparison, we know from \autoref{tab:stick numbers} that among the 10-crossing knots there are at least 71 with equilateral stick number $\leq 10$.

Since more complicated knots tend to be more condensed~\cite{Orlandini:1999hr,Moore:2004ds,Diao:2014ib,Diao:2014ck}, we expect that sampling polygons in \emph{rooted spherical confinement} -- meaning that the entire polygon is required to be contained in a small ball centered at a given root vertex -- will boost the probability of encountering complicated knots. To that end, let $\Pol(n;R)$ be the subset of $n$-gons contained in a sphere of radius $R$ centered at the root vertex. When $R \geq \left\lfloor \frac{n}{2} \right\rfloor$ we have $\Pol(n;R) = \Pol(n)$, but for $R$ very close to its minimum value of 1 (since the second vertex is always at distance 1 from the root vertex) we expect $\Pol(n;R)$ to have a much higher fraction of complicated knots than $\Pol(n)$. Indeed, as we will see, sampling 10-gons in very tight confinement produces about 7.1\% nontrivial knots, and approximately one prime knot with 10 or more crossings for every 1.9 million samples.

Fortunately, the TSMCMC algorithm described above can easily be adapted to give an ergodic Markov chain on $\Pol(n;R)$ for any $R$. The constraint that an $n$-gon lies in a sphere of radius $R$ centered at the root vertex simply means that each $d_i \leq R$. Adding these inequalities to~\eqref{eq:fan polytope} produces the polytope $\mathcal{P}_n(R) \subset \R^{n-3}$, and the analog of \autoref{thm:sampling} says that $\mathcal{P}_n(R) \times T^{n-3}$ and $\Pol(n;R)$ are measure-equivalent. Therefore, the TSMCMC algorithm described above, when applied to $\mathcal{P}_n(R)$ rather than $\mathcal{P}_n$, yields an ergodic Markov chain on $\Pol(n;R)$. Here is pseudo-code for this algorithm, taking as input the current position $(\vec{p},\vec{\theta}) \in \mathcal{P}_n(R) \times T^{n-3}$ with parameters $0< \beta < 1$ and $\gamma \in \mathbb{N}$:\footnote{As described above, we use $\beta=\frac{1}{2}$, so that hit-and-run steps and torus samples are equally likely, and $\gamma=10$, so that we iterate hit-and-run 10 times whenever we choose to do a hit-and-run step.}

			\begin{algorithm}[H]\begin{algorithmic}
				%\vspace{0.1in}
				\Function{TSMCMC}{$\vec{p},~\vec{\theta}, \beta, \gamma$} 
					%\vspace{0.1in}
					\State $prob = \Call{Uniform-Random-Variate}{0,1}$
					%\vspace{0.05in}
					\If{$prob < \beta$} 
						\For{$i=1$ \textbf{to} $\gamma$}
							\State $\vec{v} = \Call{Random-Direction-In-Dimension}{n-3}$
							\State $(t_0, t_1) = \Call{Find-Intersection-Endpoints}{\mathcal{P}_n(R), \vec{p}, \vec{v}}$
							\State $t = \Call{Uniform-Random-Variate}{t_0, t_1}$
							\State $\vec{p} = \vec{p} + t\vec{v}$
						\EndFor
					%\vspace{0.05in}
					\Else
						\For{$i=1$ \textbf{to} $n-3$}
							\State $\theta_i = \Call{Uniform-Random-Variate}{-\pi,\pi}$
						\EndFor
					%\vspace{0.05in}
					\EndIf
					%\vspace{0.05in}
					\State\Return $(\vec{p},~\vec{\theta})$
					%\vspace{0.05in}
				\EndFunction
			\end{algorithmic}\end{algorithm}

This algorithm depends on the functions {\scshape Uniform-Random-Variate}$(a,b)$, which produces a (pseudo-)random number uniformly between $a$ and $b$, {\scshape Random-Direction-In-Dimension}$(d)$, which produces a vector uniformly at random on the unit sphere in $\R^d$, and {\scshape Find-Intersection-Endpoints}$(P,\vec{p},\vec{v})$, which produces the endpoints of the intersection of the convex polytope $P$ with the line through $\vec{p} \in P$ in the direction $\vec{v}$.

For our experiments we used the version of this algorithm implemented in Ashton, Cantarella, and Chapman's free and open-source C library {\tt plcurve}~\cite{plcurve}.

\section{Identifying Knot Types}\label{sec:pipeline}

TSMCMC gives us an efficient algorithm for generating random equilateral polygons in confinement, so our task is in principle simple: generate vast quantities of, say, 10-gons, compute their knot types, and check to see if any are knots not previously known to have (equilateral) stick number less than 11. Therefore, we need a fast, reliable strategy for determining the knot types of hundreds of billions of polygons.

%Describe pipeline (TSMCMC $\to$ HOMFLY $\to$ knot identification, plcurve/pyknotid split, and any notes on optimizing lookup tables), including what gets saved and where to find the source code.

Our identification pipeline begins by using the classifier contained in {\tt plcurve}. While fast, this method of classification has significant shortcomings. Identification is based solely on the HOMFLY polynomial of the given knot, even though this invariant does not uniquely identify knots (e.g.\ $5_1$ and $10_{132}$ have the same HOMFLY polynomial). Furthermore, the {\tt plcurve} classifier will only identify up to 10-crossing knots, making it unsuitable to detect many knots which we expect to generate. This method of identification maintains an important place in our classification pipeline, however, because it is sufficient to uniquely identify roughly one half of knots with crossing number 10 or less (including the trivial knot, which represents the vast majority of stick knots generated) and it performs this identification very efficiently. 

Knots which cannot be definitively classified by {\tt plcurve} require more robust identification. Toward this end, we integrated the package {\tt pyknotid}~\cite{pyknotid} as a follow-up step in our classification pipeline. This package contains a database consisting of many invariants calculated on prime knots with crossing number 15 and below. Identification can be done by computing invariants for a given knot and then checking against the database. In particular, we used {\tt pyknotid} to identify knots based on their HOMFLY polynomial, hyperbolic volume, and minimum number of crossings. 

HOMFLY polynomials were calculated by {\tt plcurve} and hyperbolic volume by {\tt SnapPy}~\cite{snappy} (via {\tt pyknotid}). A maximum crossing number was obtained by observing the number of crossings in a default projection of a given knot. Since the {\tt pyknotid} database only contains the hyperbolic volumes of knots up to 11 crossings, we augmented the database with hyperbolic volumes of knots up to 15 crossings taken from {\tt SnapPy}. While even this more robust check does not guarantee definitive identification (e.g.\ mutant knot pairs have the same HOMFLY and hyperbolic volume) it is sufficient for every knot up to 10 crossings\footnote{Under the reasonable assumption that, for knots constructed from so few segments, we are unlikely to encounter a knot with $\geq 16$ crossings and the same HOMFLY polynomial \emph{and} hyperbolic volume as one of the knots up to 10 crossings we are interested in.} and many higher-crossing knots.

It is important to note that the database queries used in {\tt pyknotid} identification can be prohibitively slow when identifying hundreds of billions of knots. One critical improvement that we made was to manually add a database index to the columns of the {\tt pyknotid} invariant database which we were using. This data structure improves query times significantly at a cost of additional storage space and more expensive database writes. However, since the invariant database is static and relatively small, these tradeoffs were well worth the additional speed. 

We verified this pipeline by plugging in all 249 of Rawdon and Scharein's minimal equilateral stick knot examples, obtained from {\tt KnotPlot}. Up to some historical labeling confusion,\footnote{``Unfortunately, the diagrams of $10_{83}$ and $10_{86}$ given by Rolfsen do not correspond to their Conway notation or Alexander polynomial---they are interchanged''~\cite{Hartley:1983hp}. This was only corrected in the 2003 edition of Rolfsen's book -- after Rawdon and Scharein's paper -- so it's not surprising that our pipeline identified Rawdon and Scharein's $10_{83}$ as a $10_{86}$ and identified their $10_{86}$ as a $10_{83}$. This is an issue that researchers in the broader computational knot theory community should be aware of: for example, the invariants computed by {\tt SnapPy} for $10_{83}$ match the invariants in the Knot Atlas~\cite{knotatlas}, {\tt pyknotid}, and KnotInfo~\cite{knotinfo} for $10_{86}$, and conversely the {\tt SnapPy} invariants for $10_{86}$ agree with the Knot Atlas, {\tt pyknotid}, and KnotInfo invariants for $10_{83}$, so whether you identify a given knot as $10_{83}$ or $10_{86}$ may depend on which software you use. We have used the Knot Atlas naming convention in this paper.} our identification agreed with theirs, verifying that our pipeline is capable of correctly identifying all knots up to 10 crossings.

In a very few cases, the above procedure was not able to uniquely identify knot type. When it failed, {\tt pyknotid} typically returned a list of possibilities; for example, 39 of our 11-gons were identified as either the Kinoshita--Terasaka knot ($11n_{34}$) or the Conway knot ($11n_{42}$), but, since these knots are mutants and hence have the same HOMFLY polynomial and hyperbolic volume, they could not be distinguished. In such cases, we computed Dowker--Thistlethwaite codes using {\tt KnotPlot}~\cite{knotplot} and then plugged these into the KnotFinder function on KnotInfo~\cite{knotinfo}, which in turn is based on Knotscape~\cite{knotscape}. There were only 889 polygons that fell into this group and KnotFinder was able to identify most, while we identified a few others in \emph{ad hoc} fashion: in the end, there were only 59 polygons, all of them 12-gons, for which we could not determine the knot type out of the 220 billion polygons we generated. It seems likely that many of these are knots with $\geq 16$ crossings, which we would not expect to be able to identify systematically.

We also used this process for verification: we loaded the coordinates of all knots mentioned in Theorems~\ref{thm:stick numbers},~\ref{thm:new bounds}, and \ref{thm:superbridge} into {\tt KnotPlot}, computed DT codes, and then plugged those DT codes into KnotInfo's KnotFinder. This was not redundant since none of these knots were originally identified by the step in the previous paragraph. Nonetheless, this verification is not entirely independent of our primary {\tt plCurve}/{\tt pyknotid}/{\tt SnapPy} pipeline since some of KnotInfo's data seems to come from the Knot Atlas~\cite{knotatlas}, which serves as the basis for {\tt pyknotid}'s database, but it was nonetheless reassuring that in each case this second approach verified the knot type identification.

The code used to generate and identify each stick knot, along with a summary of the data generated for this paper, is available as part of our {\tt stick-knot-gen} project~\cite{stick-knot-gen}.

\section{Results}\label{sec:results}

We now have all the necessary tools in place to generate large ensembles of equilateral polygons in very tight confinement and to identify their knot types. Given the imprecision of floating point arithmetic, the ``equilateral polygons'' we generate on the computer will only be approximately equilateral, so we need some guarantee that, if we generate an approximately equilateral polygon of a given knot type, there exists a true equilateral polygon of the same knot type. The following theorem does just that:

\begin{theorem}[Millett and Rawdon~\cite{Millett:2003kl}] \label{theorem:millett_rawdon}
	Let $K$ be an $n$-stick knot where $L_i$ is the length of the $i$th edge. Let $\mu(K)$ denote the minimum distance between any two non-adjacent edges. If
	\[ 
		| L_i - 1 | ~<~ \min \left\{ \frac{\mu(K)}{n}, ~ \frac{\mu(K)^2}{4} \right\} 
	\]
for all $1 \leq i \leq n$, then there exists an unit-edge equilateral stick knot equivalent to $K$. 
\end{theorem}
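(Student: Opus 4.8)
The plan is to treat this as a stability result: I would show that the given $n$-stick knot $K$ can be joined to a genuinely unit-edge equilateral polygon by a path of closed polygons that never leaves a thin tube around $K$, so that the knot type is constant along the path. The governing principle is that the knot type of an embedded polygon is locally constant under vertex perturbations that keep all non-adjacent edges disjoint, and $\mu(K)$ is exactly the safety margin controlling this.

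First I would prove the invariance lemma. Suppose $P$ and $Q$ have the same number of vertices and corresponding vertices satisfy $\|p_i - q_i\| < \delta$. Along the straight-line homotopy $P_t = (1-t)P + tQ$, each point of each edge is a convex combination of two vertices and therefore moves by less than $\delta$ as $t$ varies. Hence the distance between any two non-adjacent edges of $P_t$ differs from the corresponding distance in $P$ by at most $2\delta$, so it stays at least $\mu(K) - 2\delta > 0$ whenever $\delta < \mu(K)/2$. Thus every $P_t$ is embedded with non-adjacent edges disjoint, no edge crossing can occur, and $P_0$ and $P_1$ represent the same knot. This is the routine half of the argument.

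The substance is constructing the equilateral target $Q$ within the displacement budget. I would fix $v_1$ and replace each edge $v_{i+1} - v_i = L_i \hat{e}_i$ by the unit vector $\hat{e}_i$, which makes all edges unit length but generally breaks closure: since $\sum_i L_i \hat{e}_i = \vec{0}$, the resulting open chain fails to close by $w = \sum_i (1 - L_i)\hat{e}_i$, with $\|w\| \leq \sum_i |L_i - 1| < n \cdot \frac{\mu(K)}{n} = \mu(K)$. The two terms in the hypothesis then play distinct roles: the $\mu(K)/n$ bound guarantees that the cumulative vertex displacement incurred by rescaling all $n$ edges stays within the $\mu(K)/2$ tube of the invariance lemma, while the $\mu(K)^2/4$ bound controls the final correction that restores exact closure, where the change in an edge's length produced by a small transverse bend is only second order in the displacement.

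The hard part will be simultaneously enforcing exact closure and exact unit edge lengths while keeping every vertex inside the tube; rescaling directions independently achieves the lengths but not closure, and naively closing the gap perturbs the lengths again. I would resolve this by viewing the unit-edge closed configurations as the zero set of the map sending vertex tuples to the $n$ squared-length defects together with the closure vector, and arguing that the error bounds place $K$ inside the basin of a contraction (equivalently, applying the implicit function theorem transverse to the equilateral configuration space), so an iterative correction converges to a nearby exactly equilateral closed polygon with geometric-series-bounded total displacement. Verifying that this displacement budget fits inside $\mu(K)/2$ --- which is precisely what the two terms of the minimum are engineered to ensure --- is the crux, after which the invariance lemma finishes the proof.
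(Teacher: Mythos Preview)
The paper does not prove this theorem at all: it is quoted verbatim from Millett and Rawdon~\cite{Millett:2003kl} and used as a black box to certify that the numerically generated, approximately equilateral polygons can be replaced by genuinely equilateral ones of the same knot type. So there is no ``paper's own proof'' to compare your proposal against.

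That said, your outline is the right shape for such a stability result, and it matches the spirit of the Millett--Rawdon argument: an isotopy lemma controlled by $\mu(K)$, followed by a construction of a nearby exactly equilateral closed polygon. Two places deserve care if you carry it out. First, your displacement accounting after rescaling edges gives vertex motion up to $(n-1)\max_i|L_i-1| < \mu(K)$, not $\mu(K)/2$, so as written it does not feed directly into your invariance lemma; you will need either to recentre rather than anchor at $v_1$, or to run the invariance lemma with the sharper observation that only non-adjacent edges matter and track their relative displacement more carefully. Second, the appeal to an implicit-function/contraction argument for simultaneous closure and unit lengths is the genuine crux, and your sketch does not yet explain why the quadratic bound $\mu(K)^2/4$ is exactly what makes the correction converge inside the tube; making that quantitative link precise is where the constants in the statement actually come from, and it is the part you should expect to spend the most effort on.
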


\begin{figure}[t]
	\centering
	\begin{minipage}{.35\textwidth}
		\includegraphics[scale=.45]{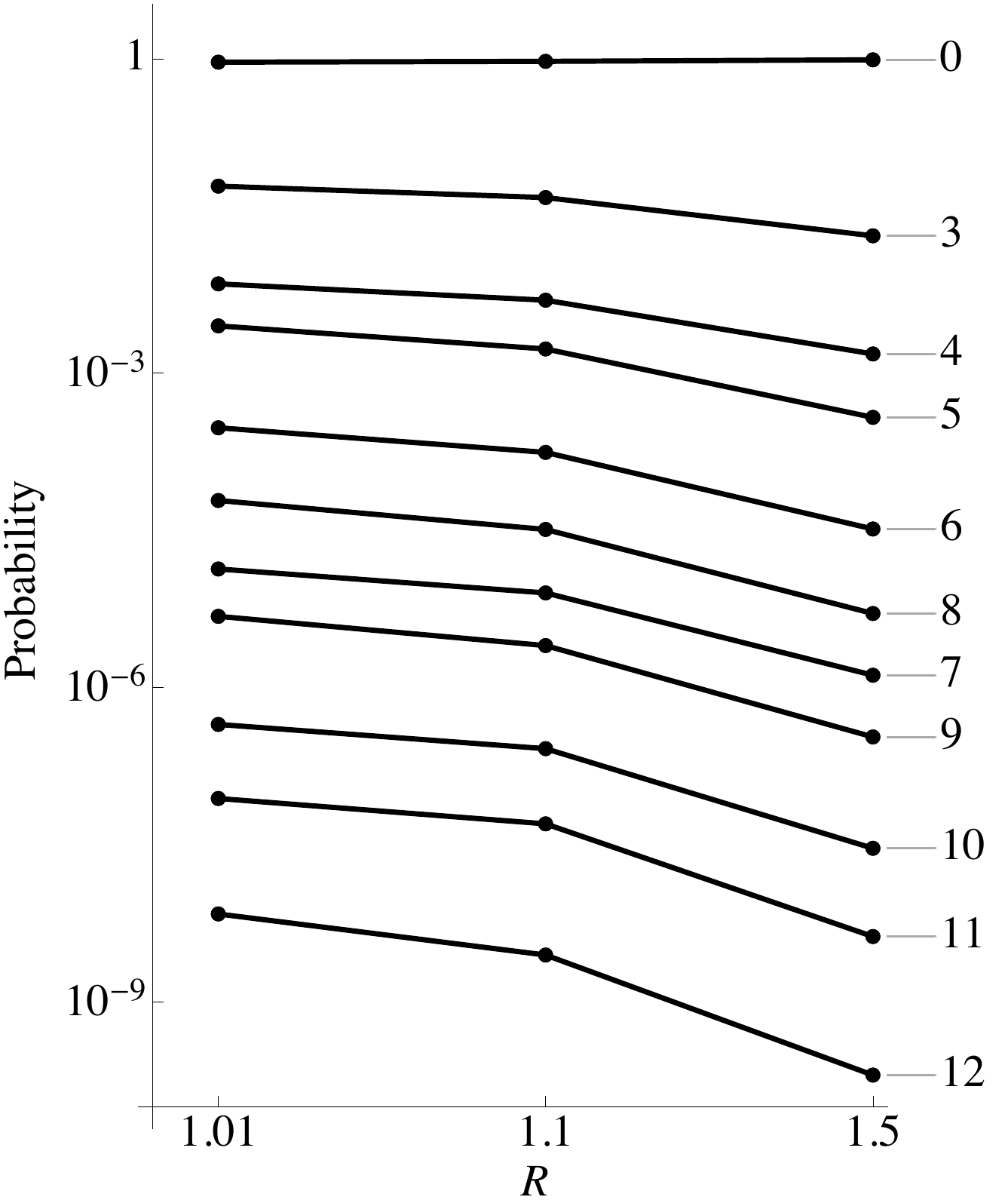} 
	\end{minipage}\hfill
	\begin{minipage}{.55\textwidth}
		\footnotesize \begin{tabular}[b]{crrr}
		 Crossings & $R=1.01$ & $R=1.1$ & $R=1.5$ \\
		 \midrule
		 0 & \text{46,435,788,199} & \text{9,457,101,359} & \text{9,776,082,950} \\
		 3 &  \text{3,045,498,555} & \text{473,763,655} & \text{204,448,942} \\
		 4 &  \text{355,759,490} & \text{49,671,263} & \text{15,247,185} \\
		 5 &  \text{141,328,728} & \text{17,008,560} & \text{3,784,326} \\
		 6 &  \text{15,054,589} & \text{1,753,019} & \text{325,635} \\
		 7 &  \text{676,177} & \text{79,926} & \text{13,097} \\
		 8 &  \text{3,046,297} & \text{322,455} & \text{50,839} \\
		 9 &  \text{238,874} & \text{25,119} & \text{3,374} \\
		 10 & \text{22,211} & \text{2,608} & 292 \\
		 11 & \text{4,364} & 500 & 42 \\
		 12 & 345 & 28 & 2 \\
		 13 & 85 & 7 & 1 \\
		 14 & 0 & 0 & 0 \\
		 15 & 1 & 1 & 0 \\
		 \midrule
		 \text{Composite} & \text{2,582,085} & \text{271,500} & \text{43,315} \\
		 \toprule
		 \text{Total} & \text{50,000,000,000} & \text{10,000,000,000} & \text{10,000,000,000} \\
		\end{tabular}
	\end{minipage}
	\caption{Left: A log plot of the probability of different crossing numbers among random 10-gons sampled from rooted spherical confinement of radii $R=1.01,1.1,1.5$. Only prime knots are included in this plot. Note the inversion between 7- and 8-crossing knots, which is mostly due to the knots $8_{19}$ and $8_{20}$. These knots both have stick number 8, whereas all 7-crossing knots have stick number 9, and each of these knots individually appeared more than 1.5 times as often as all 7-crossing knots put together at all three confinement radii. Right: The complete data for 10-gons. Keep in mind that we generated 5 times as many 10-gons at $R=1.01$ as at the other two radii. Complete frequency data can be downloaded in CSV format from the {\tt stick-knot-gen} project~\cite{stick-knot-gen}.}
	\label{fig:confinement}
\end{figure}

We focused on generating 9-, 10-, and 11-gons, since most stick numbers of knots up to 10 crossings lie in this range. Since we are free to choose the confinement radius $R$, we started by generating 10 billion $n$-gons for $n=9,10,11$ in rooted spherical confinement of radii $R=1.01, 1.1, 1.5$ to get a sense for which regime produced the highest concentration of complicated knots. Not surprisingly, $R=1.01$ was the clear winner; see \autoref{fig:confinement}. Consequently, we focused the bulk of our effort on generating polygons in this very tight confinement regime, producing a total of 50 billion each of equilateral 9-, 10-, and 11-gons in confinement radius $R=1.01$. We also generated 10 billion equilateral 12-gons in confinement radius 1.01 in the (justified!) hope that we would find 12-stick representatives of any remaining knots.

Among all these random equilateral polygons, a tiny fraction realized a knot type with fewer equal-length sticks than previously observed. For each such knot type, we took the sample maximizing the minimal distance $\mu$ between non-adjacent edges, and then stochastically applied crankshaft rotations~\cite{Millett:1994fo} (which preserve edge lengths) in search of polygons of the same knot type maximizing $\mu$. The coordinates of the vertices of each of the resulting optimal polygons are recorded in the \texttt{data} directory of {\tt stick-knot-gen}~\cite{stick-knot-gen}. For every conformation,
\[
	| L_i - 1 | ~~<~~ 10^{-2.69} \min \left\{ \frac{\mu(K)}{n}, ~ \frac{\mu(K)^2}{4} \right\} ,
\]
so the hypothesis of \autoref{theorem:millett_rawdon} is easily satisfied, and we conclude that there is a true equilateral polygon with the same number of sticks representing each knot type we found.

In particular, since we observed equilateral 9-gon versions of the knots $9_{35}$, $9_{39}$, $9_{43}$, $9_{45}$, and $9_{48}$, \autoref{thm:8sticks} and the following discussion implies \autoref{thm:stick numbers}, which we restate:

\begin{sticknumber}
	The stick number and equilateral stick number of each of the knots $9_{35}$, $9_{39}$, $9_{43}$, $9_{45}$, and $9_{48}$ is exactly 9.
\end{sticknumber}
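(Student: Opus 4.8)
The plan is to prove the claimed equalities by a squeezing argument, bounding each of $\stick(K)$ and $\eqstick(K)$ both below and above by $9$ and using the universal inequality $\eqstick(K) \geq \stick(K)$ to tie the two invariants together. For the lower bound I would observe that none of $9_{35}$, $9_{39}$, $9_{43}$, $9_{45}$, and $9_{48}$ appears in the list of knots enumerated in \thm{8sticks}; Calvo's classification therefore gives $\stick(K) \geq 9$ for each of the five knots, and since $\eqstick(K) \geq \stick(K)$ in general, also $\eqstick(K) \geq 9$.

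For the upper bound I would exhibit explicit equilateral $9$-stick realizations. The sampling procedure of \autoref{sec:symplectic}, combined with the identification pipeline of \autoref{sec:pipeline}, produces approximately equilateral confined $9$-gons identified as each of the five target knot types. Because floating-point arithmetic only guarantees edges of approximately unit length, passing from such a sample to a genuine equilateral $9$-gon is the crux of the argument. To close this gap I would invoke \autoref{theorem:millett_rawdon}: after using edge-length-preserving crankshaft moves to select, for each knot type, a representative maximizing the minimal distance $\mu(K)$ between non-adjacent edges, one checks numerically that $|L_i - 1|$ is small enough relative to $\min\{\mu(K)/n,\,\mu(K)^2/4\}$. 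The Millett--Rawdon theorem then yields a true unit-edge equilateral $9$-gon of the same knot type, so $\eqstick(K) \leq 9$. Combining the bounds gives $9 \leq \stick(K) \leq \eqstick(K) \leq 9$, hence $\stick(K) = \eqstick(K) = 9$ for each of the five knots.

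The main obstacle is the upper bound, and specifically the trustworthiness of the numerical input: one must be confident both that each observed approximately equilateral $9$-gon genuinely has the claimed knot type---rather than a HOMFLY-and-volume-indistinguishable impostor---and that the hypothesis of \autoref{theorem:millett_rawdon} holds with a comfortable margin. The first concern is addressed by the multi-invariant identification pipeline and its independent cross-check against KnotInfo's KnotFinder, and the second by verifying that the sharper inequality $|L_i-1| < 10^{-2.69}\min\{\mu(K)/n,\,\mu(K)^2/4\}$ holds for every selected conformation.
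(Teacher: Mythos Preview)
Your proposal is correct and follows essentially the same approach as the paper: Calvo's classification (\thm{8sticks}) gives the lower bound $\stick(K)\geq 9$, the observed approximately equilateral $9$-gons together with the Millett--Rawdon perturbation theorem (\autoref{theorem:millett_rawdon}) give $\eqstick(K)\leq 9$, and the chain $9\leq\stick(K)\leq\eqstick(K)\leq 9$ finishes the argument. Your explicit attention to the numerical margin and to the knot-identification verification matches exactly what the paper does in \autoref{sec:results}.
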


We recall a result of Randell~\cite{Randell:1998td} relating stick number and the \emph{superbridge index} $\superbridge(K)$ of a knot $K$, which is the minimum over all embeddings of $K$ of the maximum number of local maxima in any projection to a line:

\begin{theorem}[Randell~\cite{Randell:1998td}]\label{thm:superbridge bound}
	For any knot $K$, $\superbridge(K) \leq \frac{1}{2}\stick(K)$.
\end{theorem}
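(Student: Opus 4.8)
The plan is to exhibit a single embedding of $K$ that realizes the bound, namely a polygonal one with $\stick(K)$ edges. Since $\superbridge(K)$ is the minimum over all embeddings of the maximal bridge number (number of local maxima of a height function) taken over all directions, it suffices to produce one embedding $f$ for which the number of local maxima of the height function in direction $v$ is at most $\frac{1}{2}\stick(K)$ for \emph{every} direction $v$. First I would fix a minimal stick representative $f$ of $K$, so that $f$ is a closed polygon with $n = \stick(K)$ edges and $n$ vertices.

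The core of the argument is a counting step for a fixed generic direction $v$. Projecting onto the line spanned by $v$ yields a height function that is linear on each edge, so all of its local extrema occur at vertices. Traversing the closed polygon once, the height alternately increases and decreases between consecutive extrema, which forces local maxima and local minima to alternate around the curve; hence there are equally many of each. Writing $M$ for the number of local maxima, there are then also $M$ local minima, and since these $2M$ extrema occupy distinct vertices among the $n$ available, $2M \leq n$, i.e. the number of local maxima in direction $v$ is at most $\left\lfloor n/2 \right\rfloor \leq \frac{1}{2}\stick(K)$.

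The main obstacle is that the superbridge index takes the supremum over \emph{all} directions, including the non-generic ones for which the counting step is delicate: when an edge is perpendicular to $v$, the height function acquires a horizontal plateau, so local maxima are no longer simply isolated vertices. The point to verify is that such special directions never produce more local maxima than nearby generic ones. A plateau that is a local maximum contributes a single maximum and stays a single maximum under a small perturbation of $v$, whereas a horizontal edge that is not an extremum either remains monotone or splits into one maximum and one minimum; in every case, perturbing $v$ to a generic direction cannot decrease the count of local maxima. Hence $\sup_v$ of the bridge number of $f$ equals its supremum over generic directions, which the previous paragraph bounds by $\frac{1}{2}\stick(K)$. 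Combining this with $\superbridge(K) \leq \sup_v b_v(f)$ completes the proof.
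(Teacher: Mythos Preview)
The paper does not actually prove this theorem; it is quoted as a result of Randell with a citation and used as a black box to deduce \autoref{cor:superbridge} and \autoref{thm:superbridge}. So there is no in-paper proof to compare against.

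That said, your argument is correct and is essentially Randell's original one. The key counting step---on a closed polygon with $n$ edges a generic height function has its extrema at vertices, maxima and minima alternate, hence there are at most $\lfloor n/2\rfloor$ maxima---is exactly the point. Your treatment of non-generic directions via upper semicontinuity of the local-maximum count is also the right idea; the only thing to make explicit is the convention that a horizontal plateau which is a local maximum contributes a single maximum (i.e., one counts connected components of the local-maximum set), which you implicitly assume and which is the standard interpretation. With that understood, the proof is complete.
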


Combined with \autoref{thm:stick numbers}, this implies that each of the knots $9_{35}$, $9_{39}$, $9_{43}$, $9_{45}$, and $9_{48}$ has superbridge index $\leq 4$. Since the superbridge index of a knot is strictly greater than the bridge index~\cite{Kuiper:1987ki} and each of these knots has bridge index equal to 3~\cite{knotinfo}, we get the following corollary:

\begin{corollary}\label{cor:superbridge}
	Each of the knots $9_{35}$, $9_{39}$, $9_{43}$, $9_{45}$, and $9_{48}$ has superbridge index equal to 4.
\end{corollary}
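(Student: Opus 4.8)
The plan is to pin down $\superbridge(K)$ for each of the five knots by sandwiching it between matching lower and upper bounds, both of which are now available: the upper bound comes from the stick number computation in \thm{stick numbers} via Randell's inequality, and the lower bound comes from the bridge index together with the known strict separation between superbridge index and bridge index. Since the superbridge index is integer-valued, I expect these two bounds to collapse to the single value $4$.

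First I would establish the upper bound. By \thm{stick numbers} we have $\stick(K)=9$ for each $K\in\{9_{35},9_{39},9_{43},9_{45},9_{48}\}$, so Randell's theorem (\thm{superbridge bound}) gives
\[
	\superbridge(K)\leq\tfrac{1}{2}\stick(K)=\tfrac{9}{2}.
\]
Here I would invoke the fact that the superbridge index is a positive integer: the bound $\superbridge(K)\leq\frac{9}{2}$ therefore sharpens to $\superbridge(K)\leq 4$. This integrality rounding is the only genuinely load-bearing observation in the argument, and it is precisely why stick number $9$ (an odd number) forces a superbridge index of at most $4$ rather than merely at most $4.5$.

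Next I would establish the matching lower bound. Each of the five knots has bridge index $3$ (a tabulated fact, which I would cite from~\cite{knotinfo}), and the superbridge index strictly exceeds the bridge index for any nontrivial knot~\cite{Kuiper:1987ki}. Hence $\superbridge(K)>3$, and again using integrality, $\superbridge(K)\geq 4$. Combining the two bounds yields $\superbridge(K)=4$ for each of the five knots, completing the proof. I do not anticipate any real obstacle here, since all of the substantive work was carried out in proving \thm{stick numbers}; the corollary is a short deduction that merely assembles that result with two standard external facts, the main subtlety being the appeal to integrality of $\superbridge$ at both ends.
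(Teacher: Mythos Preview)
Your proposal is correct and follows exactly the paper's own argument: apply Randell's bound $\superbridge(K)\le\tfrac12\stick(K)=\tfrac92$ together with integrality to get $\superbridge(K)\le 4$, then use Kuiper's strict inequality $\superbridge(K)>b(K)=3$ (with bridge index $3$ cited from KnotInfo) to force $\superbridge(K)=4$.
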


We also discovered a number of knots realizable with 10, 11, or 12 equilateral sticks that had not previously been observed:

\begin{newbounds}
	The equilateral stick number of each of the knots $9_2$, $9_3$, $9_{11}$, $9_{15}$, $9_{21}$, $9_{25}$, $9_{27}$, $10_8$, $10_{16}$, $10_{17}$, $10_{56}$, $10_{83}$, $10_{85}$, $10_{90}$, $10_{91}$, $10_{94}$, $10_{103}$, $10_{105}$, $10_{106}$, $10_{107}$, $10_{110}$, $10_{111}$, $10_{112}$, $10_{115}$, $10_{117}$, $10_{118}$, $10_{119}$, $10_{126}$, $10_{131}$, $10_{133}$, $10_{137}$, $10_{138}$, $10_{142}$, $10_{143}$, $10_{147}$, $10_{148}$, $10_{149}$, $10_{153}$, and $10_{164}$ is less than or equal to 10.
	
	The equilateral stick number of each of the knots $10_3$, $10_6$, $10_7$, $10_{10}$, $10_{15}$, $10_{18}$, $10_{20}$, $10_{21}$, $10_{22}$, $10_{23}$, $10_{24}$, $10_{26}$, $10_{28}$, $10_{30}$, $10_{31}$, $10_{34}$, $10_{35}$, $10_{38}$, $10_{39}$, $10_{43}$, $10_{44}$, $10_{46}$, $10_{47}$, $10_{50}$, $10_{51}$, $10_{53}$, $10_{54}$, $10_{55}$, $10_{57}$, $10_{62}$, $10_{64}$, $10_{65}$, $10_{68}$, $10_{70}$, $10_{71}$, $10_{72}$, $10_{73}$, $10_{74}$, $10_{75}$, $10_{77}$, $10_{78}$, $10_{82}$, $10_{84}$, $10_{95}$, $10_{97}$, $10_{100}$, and $10_{101}$ is less than or equal to 11.
	
	The equilateral stick number of each of the knots $10_{76}$ and $10_{80}$ is less than or equal to 12.
	
	In particular, all knots up to 10 crossings have equilateral stick number $\leq 12$.
\end{newbounds}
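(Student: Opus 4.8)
The plan is to reduce each asserted upper bound to the existence of a single genuinely equilateral polygon of the relevant knot type, since exhibiting an $m$-edge equilateral realization of a knot $K$ immediately forces $\eqstick(K) \le m$. First I would justify the sampling engine: by the confined analog of \autoref{thm:sampling}, the space $\Pol(n;R)$ of equilateral $n$-gons in rooted spherical confinement of radius $R$ is measure-equivalent to $\mathcal{P}_n(R) \times T^{n-3}$, so running TSMCMC (hit-and-run on the polytope $\mathcal{P}_n(R)$ alternated with fresh uniform torus samples) gives an ergodic Markov chain that samples $\Pol(n;R)$ according to its natural measure. The tight confinement $R = 1.01$ is not needed for correctness but for efficiency: because complicated knots are condensed, shrinking $R$ sharply raises the frequency of the rare knots being sought, so that among finitely many (here, tens of billions of) samples one actually encounters an example of each target knot.

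The second and most delicate step is to pass from the floating-point polygons the sampler returns, which are only approximately equilateral, to honest unit-edge polygons. Here I would invoke \autoref{theorem:millett_rawdon}: for a generated $n$-stick conformation of knot type $K$ with edge lengths $L_i$ and minimal distance $\mu(K)$ between non-adjacent edges, it suffices to verify $|L_i - 1| < \min\{\mu(K)/n,\,\mu(K)^2/4\}$ for every $i$, whereupon a true equilateral polygon of the same type is guaranteed to exist. To secure a comfortable margin I would, for each record-setting knot, keep the observed sample with the largest $\mu$ and then apply edge-length-preserving crankshaft rotations to enlarge $\mu$ further; as recorded, the resulting conformations satisfy the inequality with a factor of $10^{-2.69}$ to spare, so the hypothesis is met decisively.

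The third step is identification, which I would carry out with the \texttt{plcurve}/\texttt{pyknotid}/\texttt{SnapPy} pipeline using the HOMFLY polynomial, hyperbolic volume, and minimal crossing number, cross-checking the finitely many new examples underlying the theorem by computing Dowker--Thistlethwaite codes and querying KnotInfo's KnotFinder. The aggregate claim then follows by bookkeeping: the $\le 10$ and $\le 11$ examples account for the long lists of knots whose bounds are improved, while the dedicated ensemble of 12-gons supplies 12-stick realizations of the two remaining holdouts $10_{76}$ and $10_{80}$; combining these with the previously established Rawdon--Scharein bounds leaves no knot up to 10 crossings with equilateral stick number exceeding 12.

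I expect the genuine obstacle to be neither the Millett--Rawdon inequality nor any individual geometric estimate, but the reliability of knot identification at this scale. The HOMFLY-plus-volume signature does not separate mutants (for instance the Kinoshita--Terasaka and Conway knots), and the fast \texttt{plcurve} classifier only recognizes knots up to 10 crossings, so the real work is certifying that each reported record is the claimed knot and not a higher-crossing impostor sharing its invariants. This is exactly why the independent KnotFinder verification of the specific conformations behind \autoref{thm:new bounds} is indispensable rather than a mere formality.
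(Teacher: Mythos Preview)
Your proposal is correct and follows essentially the same approach as the paper: generate large ensembles of confined equilateral polygons via TSMCMC, certify the approximately equilateral examples using the Millett--Rawdon inequality (after crankshaft optimization of $\mu$), identify knot types with the \texttt{plcurve}/\texttt{pyknotid}/\texttt{SnapPy} pipeline and cross-check via KnotFinder, and combine the resulting new bounds with the prior Rawdon--Scharein bounds to conclude that every knot up to 10 crossings has equilateral stick number at most 12. You have also correctly identified the main point of fragility as the knot-identification step rather than the geometric certification.
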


In addition to the knots listed above, among the 10-stick knots we generated were 195 different prime knots with 11 or more crossings, including the 15-crossing knots $15n_{41,127}$ and $15n_{59,007}$; among the 11-stick knots we generated were 1212 different prime knots with 11 or more crossings, including eight 16-crossing knots; and among the 12-stick knots were at least 1888 different prime knots with 11 or more crossings. Coordinates for all of these knots with more than 10 crossings can be found in the supplemental data provided in {\tt stick-knot-gen}~\cite{stick-knot-gen}.

\begin{figure}[t]
	\centering
		\subfloat[$11n_{74}$]{\includegraphics[scale=.3,valign=c]{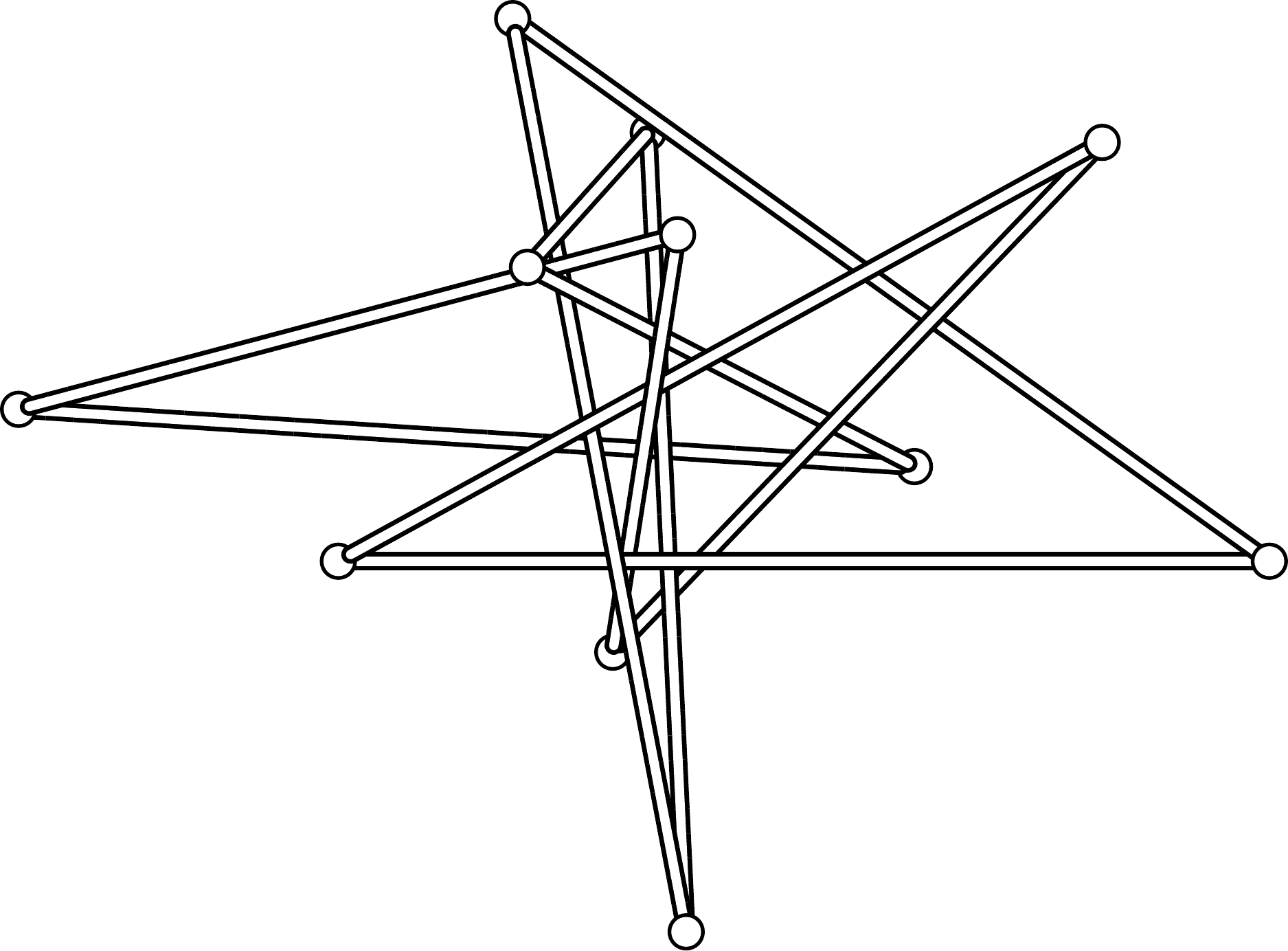}
		\vphantom{\includegraphics[scale=.3,valign=c]{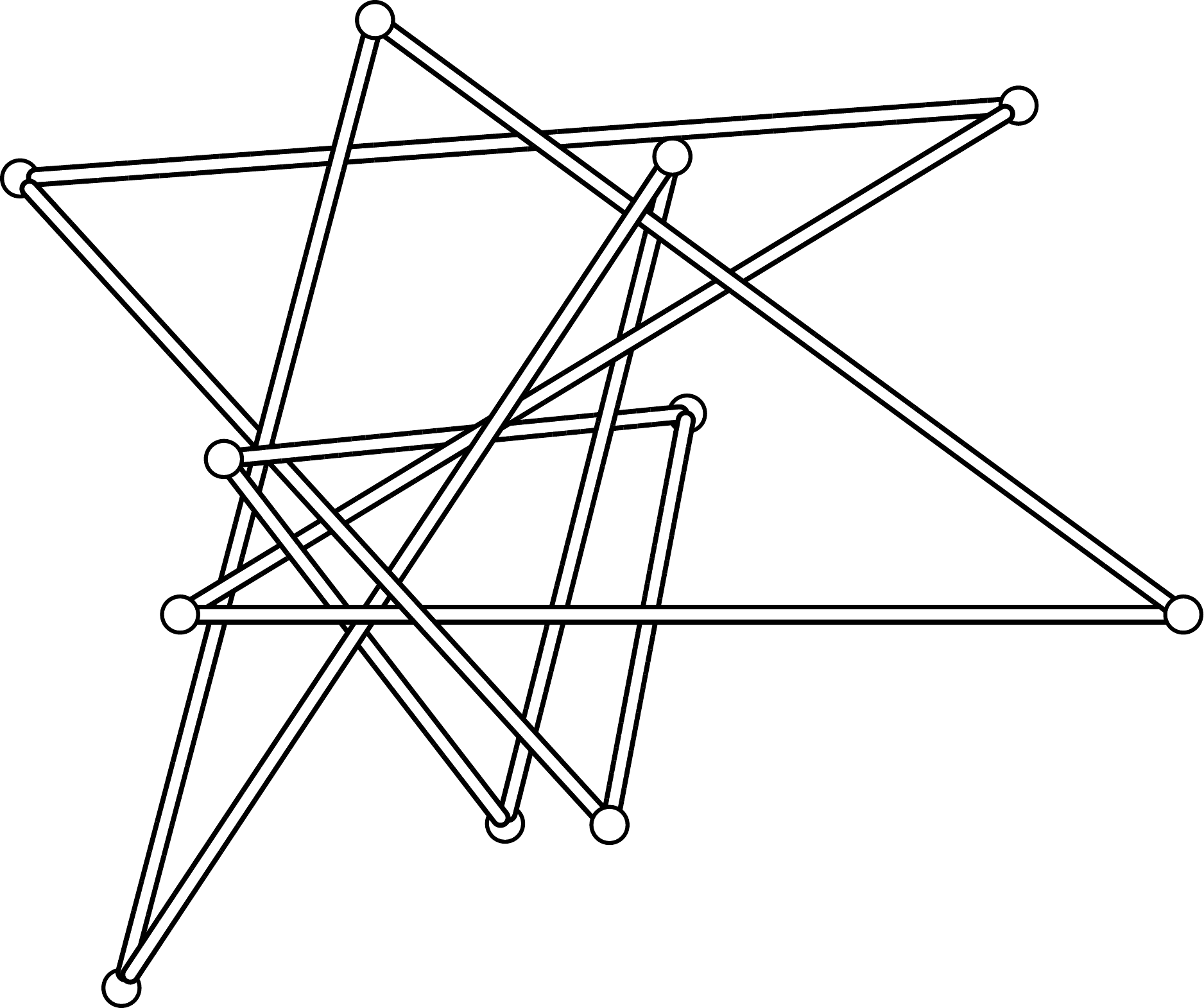}}}\qquad\quad 
		\subfloat[$11n_{81}$]{\includegraphics[scale=.3,valign=c]{K11n81z.pdf}}\qquad
	\caption{These 11-stick examples of $11n_{74}$ and $11n_{81}$ prove that these knots have superbridge index 5. Each knot is shown in orthographic perspective from the direction of the positive $z$-axis.}
	\label{fig:superbridge}
\end{figure}

We observed a number of 4-bridge knots among the 11-gons (see \autoref{fig:superbridge} for two examples), meaning these knots have superbridge index 5:

\begin{theorem}\label{thm:superbridge}
	Each of the knots $11n_{71}$, $11n_{73}$, $11n_{74}$, $11n_{75}$, $11n_{76}$, $11n_{78}$, and $11n_{81}$ has superbridge index equal to 5 and stick number either 10 or 11.
\end{theorem}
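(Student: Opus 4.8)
The plan is to pin down both invariants by sandwiching them between a computational upper bound and a classical lower bound, using Randell's inequality $\superbridge(K) \leq \tfrac{1}{2}\stick(K)$ (\autoref{thm:superbridge bound}) as the bridge between the two. The external inputs are the 11-stick equilateral realizations of each listed knot that turned up among our random 11-gons, together with the fact that each of these knots has bridge index 4 according to KnotInfo~\cite{knotinfo}.

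First I would establish $\stick(K) \leq 11$. The sampled polygons are only approximately equilateral, so exactly as in the proof of \autoref{thm:new bounds} I would invoke \autoref{theorem:millett_rawdon} to replace each approximately-equilateral 11-gon by a genuine equilateral 11-stick knot of the same type; since $\stick(K) \leq \eqstick(K)$, this yields $\stick(K) \leq 11$. Feeding this into Randell's bound gives $\superbridge(K) \leq \tfrac{11}{2}$, hence $\superbridge(K) \leq 5$ because the superbridge index is an integer.

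For the matching lower bound on the superbridge index I would appeal to Kuiper's theorem~\cite{Kuiper:1987ki} that the superbridge index strictly exceeds the bridge index; since $\mathrm{bridge}(K) = 4$, this forces $\superbridge(K) \geq 5$, and therefore $\superbridge(K) = 5$. Now running Randell's inequality in the other direction closes the stick-number estimate from below: $5 = \superbridge(K) \leq \tfrac{1}{2}\stick(K)$ gives $\stick(K) \geq 10$, and combined with the realization bound $\stick(K) \leq 11$ we conclude $\stick(K) \in \{10, 11\}$.

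The genuinely hard part -- the reason the stick number is left as ``either 10 or 11'' rather than a single value -- is the gap between these two bounds. The crossing-number lower bound of \autoref{thm:stick inequalities} yields only $\stick(K) \geq 9$ for an 11-crossing knot, so it is the superbridge route that supplies the decisive lower bound of 10; but no available tool rules out a 10-stick realization, and we observed none among our samples, so the stick number genuinely remains undetermined between the two values.
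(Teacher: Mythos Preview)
Your argument is correct and follows essentially the same route as the paper's proof: use the 11-stick realizations to get $\stick(K)\leq 11$, apply Randell's inequality for $\superbridge(K)\leq 5$, invoke Kuiper's strict inequality together with bridge index $4$ for $\superbridge(K)\geq 5$, and feed $\superbridge(K)=5$ back through Randell to obtain $\stick(K)\geq 10$. The only cosmetic difference is that the paper cites Musick~\cite{Musick:2012uz} rather than KnotInfo for the bridge index computation.
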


\begin{proof}
	By \autoref{thm:superbridge bound} these knots satisfy $2\superbridge(K) \leq \stick(K) \leq 11$. Since superbridge index is strictly greater than bridge index and each of these knots has bridge index 4~\cite{Musick:2012uz}, the result follows.
\end{proof}

Coordinates of each of the knots mentioned in Theorems~\ref{thm:stick numbers}, \ref{thm:new bounds}, and~\ref{thm:superbridge} are given in \hyperref[sec:coordinates]{Appendix~B} and can be downloaded from the {\tt stick-knot-gen} project~\cite{stick-knot-gen}.

\section{Conclusion and Open Questions}\label{sec:conclusion}

To the best of our knowledge the information on stick numbers of knots up to 10 crossings contained in \autoref{tab:stick numbers} represents the state of the art, but it likely remains incomplete. For example, we know from Rawdon and Scharein's work~\cite{Rawdon:2002wj} that there are equilateral 10-stick realizations of the knots $9_{16}$, $10_{109}$, $10_{113}$, and $10_{114}$ and equilateral 11-stick realizations of the knots $10_{1}$ and $10_{123}$, but we never saw any examples with these stick numbers among the 140 billion equilateral 10- and 11-gons we generated. Of course, Rawdon and Scharein's bounds on these 6 knots are still given in \autoref{tab:stick numbers}, but it seems implausible that these are the only knots achievable with 10 or 11 sticks that we failed to generate. 

We reiterate the viewpoint that generating random polygons in confinement is a strategy for boosting the probability of complicated knots. We generated equilateral stick knots in rooted spherical confinement because this is the only confinement regime in which we know of a strategy for ergodically sampling equilateral polygons, but there is no particular reason to believe that rooted spherical confinement is optimal for producing complicated knots. Aesthetically, \emph{unrooted} spherical confinement seems preferable, with the substantial side benefit that an understanding of the probability distribution of random equilateral polygons in unrooted spherical confinement would have important applications to the packing of DNA in viral capsids and other biological problems~\cite{Arsuaga:2005gr,Jardine:2006uc}.

Since we wanted to see as many complicated knots as possible, we have mostly focused on knots in very tight confinement. There is no particular reason to believe, though, that any knot which is constructible with $n$ equilateral sticks is necessarily constructible with $n$ equilateral sticks in any given confinement radius. Indeed, Rawdon and Scharein's 11-stick equilateral $10_{123}$ cannot be put in rooted spherical confinement of radius less than 1.1638, the largest of all of their examples, which may explain why we did not observe $10_{123}$ among our 11-gons. Similarly, we observed $4_1 \# \, 6_2$ and $4_1 \# \, 6_3$ knots among the 10 billion 11-stick knots in rooted spherical confinement of radius 1.1 that we generated, but not among our 50 billion 11-stick knots in confinement radius 1.01. Relatedly, composite knots remain relatively poorly understood in the context of stick number, and we know of no definitive table of stick numbers for low-crossing composite knots.

The TSMCMC approach to generating equilateral polygons described in \autoref{sec:symplectic} generalizes to arbitrary spaces of polygons with given edge lengths in rooted spherical confinement. If we want to generate $n$-gons with edgelengths given by $(r_1, \dots , r_n)$ with $r_i \geq 0$ in rooted spherical confinement of radius $R$, we must slightly modify the inequalities in~\eqref{eq:fan polytope}, but otherwise the algorithm is exactly the same and is still ergodic. 

Since equilateral polygons are more flexible than any other polygons of fixed edge lengths~\cite{Cantarella:2018tp,Khoi:2005ur}, we do not expect that sampling random polygons with other fixed edge lengths will produce generally better bounds on stick number than sampling random equilateral polygons. However, it remains an open question whether $\stick(K) = \eqstick(K)$ for all knots $K$, so sampling other fixed edge length spaces might help to shed light on this problem. We know of only two examples of knots up to 10 crossings for which the upper bounds on stick number and equilateral stick number differ: $9_{29}$ and $10_{79}$. Rawdon and Scharein's equilateral 12-stick $10_{79}$ cannot be put in rooted spherical confinement of radius less than 1.1367, second only to their $10_{123}$ mentioned above, suggesting that an equilateral 11-stick example, if it exists, may not occur in the tight confinement regime we have focused on. $9_{29}$ is particularly interesting since its stick number is exactly 9, and hence proving that the equilateral stick number of $9_{29}$ is 10 would immediately imply that stick number and equilateral stick number are distinct invariants. More generally, any theoretical results on stick number or equilateral stick number, even for individual knots, would be interesting and welcome additions to the field.

In a companion paper~\cite{withRyan}, we use the relationship between bridge index, superbridge index, and stick number to determine the exact stick number of the knots $13n_{592}$ and $15n_{41,127}$.

\section*{Acknowledgments}

We are grateful to Colin Adams, Ryan Blair, Jason Cantarella, Harrison Chapman, Tetsuo Deguchi, Kate Hake, Gyo Taek Jin, Ken Millett, Eric Rawdon, and Erica Uehara for helpful conversations about random polygons and stick knots. This work was partially supported by a grant from the Simons Foundation (\#354225, CS).

\bibliography{stickknots-special,stickknots}

\newpage

\appendix

\section{Stick Number Bounds}\label{sec:appendix}

\renewcommand{\theoremautorefname}{Thm.}

We give the best known upper bounds on stick number for all prime knots up to 10 crossings, including references for where the given bound was proved. Bounds marked with an asterisk ($\ast$) are known to be the exact stick number of the knot. Two knots, $9_{29}$ and $10_{79}$, have bounds marked with a dagger ($\dag$): these are the two knots for which the best bounds on stick number and equilateral stick number differ. In both cases, the equilateral stick number bound is 1 greater than the bound listed in this table.

Coordinates of equilateral versions of each of these knots realizing the best known bound can be downloaded either as tab-separated text files or in an SQLite database from the {\tt stick-knot-gen} project~\cite{stick-knot-gen}.

\begin{multicols*}{3}
\TrickSupertabularIntoMulticols

\tablefirsthead{
	\multicolumn{1}{c}{$K$} & \multicolumn{2}{l}{$\stick(K)$ bound} \\
	\midrule
}
\tablehead{
	\multicolumn{1}{c}{$K$} & \multicolumn{2}{l}{$\stick(K)$ bound} \\
	\midrule
}
\tablelasttail{\bottomrule}
% [inline block 0: 101 envs, 78966 chars -> data_tex | \begin{supertabular*}{.26\textwidth}{lll} \label{tab:stick numbers} $ 0_{1} $ & $3^*$ &  \\...]

\end{coordinates}
\vspace{0.1in}

\end{multicols*}

\end{document}